\DeclareFontFamily{U}{matha}{\hyphenchar\font45}
\DeclareFontShape{U}{matha}{m}{n}{
	<5> <6> <7> <8> <9> <10> gen * matha
	<10.95> matha10 <12> <14.4> <17.28> <20.74> <24.88> matha12
}{}
\DeclareSymbolFont{matha}{U}{matha}{m}{n}
\DeclareMathSymbol{\Lt}{3}{matha}{"CE}
\DeclareMathSymbol{\Gt}{3}{matha}{"CF}
\newcommand{\delete}[1]{}
\newcommand{\overbar}[1]{\mkern 1mu\overline{\mkern-2mu#1\mkern-1mu}\mkern 1mu}
\newcommand{\bqa}{\begin{equation}}
\newcommand{\eqa}{\end{equation}}
\newcommand{\bea}{\begin{eqnarray}}
\newcommand{\eea}{\end{eqnarray}}
\newcommand{\bna}{\begin{eqnarray*}}
\newcommand{\ena}{\end{eqnarray*}}
\newcommand{\bma}{\begin{pmatrix}}
\newcommand{\ema}{\end{pmatrix}}
\def\valpha{\text{\scalebox{0.88}[1.02]{$\alpha$}}} 
\def\vepsilon{\text{\scalebox{0.9}[1]{$\varepsilon$}}} 
\def\sumx{\sideset{}{^\star}\sum}
\def\bz{{\mathbb Z}}
\def\br{{\mathbb R}}
\def\sl2z{SL(2,\bz)}
\def\psl2z{PSL(2,\bz)}
\def\gl2r{GL(2,\br)}
\def\Q{\mathbb{Q}}
\def\sym{\mathrm {sym}}
\def\ssharp{\scalebox{0.6}{$\sharp$}}
\def\splus{\scalebox{0.66}{$+$}}
\def\sminus{\scalebox{0.66}{$-$}}
\def\spm{\scalebox{0.66}{$\pm$}}
\newtheorem{lemma}{Lemma}[section]
\newtheorem{thm}[lemma]{Theorem}
\newtheorem{prop}[lemma]{Proposition}
\newtheorem{conj}[lemma]{Conjecture}
\theoremstyle{definition}
\newtheorem{remark}[lemma]{Remark}
\newtheorem*{acknowledgement}{Acknowledgements}
\newcommand{\bit}{\begin{itemize}}
\newcommand{\eit}{\end{itemize}}
\title{Bias of Root Numbers for Modular Newforms of Cubic Level}
\author[ Q. Pi  and Z. Qi]{Qinghua Pi and Zhi Qi}
\address{School of Mathematics and Statistics,
Shandong Univeristy, Weihai,
Weihai 264209,
China}
\email{qhpi@sdu.edu.cn}
\address{School of Mathematical Sciences, Zhejiang University, Hangzhou, 310027, China}
\email{zhi.qi@zju.edu.cn}
\subjclass[2000]{11F11, 11F72.}
\keywords{bias of root numbers, modular newforms, simple Petersson formulas}
\date{\today}
\thanks{The first named author is supported by
China Postdoctoral Science Foundation (Grant No. 2018M632658), Shandong Provincial Natural Science Foundation (Grant No. ZR2020MA001), and  in part by Innovative Research Team in University (Grant No. IRT16R43). The second named author is supported by the National Natural Science Foundation of China (Grant No. 12071420).}
\begin{document}
\begin{abstract}
Let $H^{\spm}_{2k} (N^3)$ denote the set of modular newforms of cubic level $N^3$, weight $2 k$, and root number $\pm 1$. For $N > 1$ squarefree and $k>1$, we use an analytic method to establish neat and explicit formulas for the difference $|H^{\splus}_{2k} (N^3)| - |H^{\sminus}_{2k} (N^3)|$ as a multiple of the product of $\varphi (N)$ and the class number of $\mathbb{Q}(\sqrt{- N})$. In particular, the formulas exhibit a strict bias towards the root number $+1$. Our main tool is a root-number weighted simple Petersson formula for such newforms.
\end{abstract}
\maketitle

\section{Introduction}
\setcounter{equation}{0}

Let $S_{2k} (M)$ denote the space of modular forms of level $M$, weight $2 k$, and trivial nebentypus. According to the Atkin--Lehner theory of newforms (see \cite{Atkin-Lehner-1} and \cite[\S 6.6]{Iwaniec-Topics}), $S_{2k} (M)$ has an orthogonal decomposition with respect to the Petersson inner product
\begin{align*}
	S_{2k} (M) = S_{2k}^{\flat} (M) \oplus S_{2k}^{\star} (M),
\end{align*}
where $ S_{2k}^{\flat} (M) $ is the space of oldforms and $S_{2k}^{\star} (M)$ is the   space spanned by newforms. A newform $f \in S_{2k}^{\star} (M)$ is an eigenfunction of all the Hecke operators $T_n$, normalized so that the first Fourier coefficient is $1$. Let  $\lambda_f (n)$ denote its $n$-th Hecke eigenvalue, which is known to be real.   Then we have the Fourier expansion
\begin{align*}
	f (z) = \sum_{n=1}^{\infty} \lambda_f (n) n^{k-1/2} e (n z), \quad \mathrm{Im} (z) > 0,
\end{align*}
where $e (z) = e^{2\pi i z}$.
Let $H^{\star}_{2k} (M)$ denote the orthogonal basis for $S_{2k}^{\star} (M)$ consisting of   newforms of level $M$ and weight $2k$. The dimension of $ S_{2k}^{\star} (M) $ (or the cardinality $|H^{\star}_{2k} (M)|$) is well known by the work of G. Martin (see \cite[Theorem 1]{Martin-Dimension}).

For $f \in H^{\star}_{2k} (M)$, let $\epsilon_f \in \{ \pm 1 \}$ denote the root number of $f$, i.e.  the sign in the functional equation of its $L$-function $L (s, f)$. 
We would like to consider the splitting    $$H^{\star}_{2k} (M) = H^{\splus}_{2k} (M) \cup H^{\sminus}_{2k} (M)$$ according to the sign of root number. A natural question is: What can we say about $|H^{\spm}_{2k} (M)|$ or, in other words, the distribution of root numbers in $H^{\star}_{2k} (M)$?

When the level $M > 1$ is {\it squarefree},\footnote{The case $M =1$ is not of interest here, for the root number is $i^{2k}= (-1)^k$, so either $H^{\star}_{2k}(1) = H^{\splus}_{2k} (1)$ or $H^{\star}_{2k}(1) = H^{\sminus}_{2k} (1)$.} Iwaniec, Luo, and Sanark  established the Petersson formulas over $H^{\star}_{2k} (M)$ and $H^{\spm}_{2k} (M) $  and used them to prove the asymptotic formula (see \cite[Corollary 2.14]{ILS-LLZ}):
\begin{align}\label{1eq: ILS, squarefree}
	|H^{\spm}_{2k} (M)| = \frac {2k-1} {24} \varphi (M) + O \big((kM)^{5/6}\big),
\end{align}
where as usual $\varphi (M)$ is
Euler's totient function. In particular, this implies that the root numbers are equidistributed between $+1$ and $-1$ as $kM \rightarrow \infty$. Note that the formula of G. Martin (see \cite[Theorem 1]{Martin-Dimension} or \cite[Theorem 2.1]{Martin-Bias}) in this case reads:
\begin{align}\label{1eq: |H star|}
 |H^{\star}_{2k} (M)| = \frac {2k-1} {12} \varphi (M) +  \bigg(\frac 1 4 + \left\lfloor \frac {k} 2 \right\rfloor - \frac {k} 2 \bigg)  v_2^{\ssharp} (M)  +   \bigg(\frac 1 3 + \left\lfloor \frac {2k} 3 \right\rfloor - \frac {2k} 3 \bigg)v_3^{\ssharp} (M) + \delta (k, 1) \mu (M),
\end{align}
where $v_2^{\ssharp}  $ and $v_3^{\ssharp} $ are multiplicative functions defined by
\begin{align*}
  v_2^{\ssharp} (p) = \chi_{-4} (p) - 1, \quad v_3^{\ssharp} (p) = \chi_{-3} (p) - 1,
\end{align*} 
$\delta ({k, 1})$ is the Kronecker $\delta$-symbol that detects $k = 1$,   $\mu  $ is the M\"obius function, and $\chi_{-4}$ and $\chi_{-3}$ are the quadratic characters attached to $\mathbb{Q}(\sqrt{-4})$ and  $\mathbb{Q}(\sqrt{-3})$, respectively.

Recently, using Yamauchi's trace formula for  Atkin--Lehner   involutions on $S^{\star}_{2k} (M)$ in \cite{Yamauchi-trace}\footnote{Some mistakes in \cite{Yamauchi-trace} were corrected by Skoruppa and Zagier \cite{SZ-trace}.}, K. Martin (\cite[\S 2]{Martin-Bias}) obtained the following formula for squarefree $M > 3$ (the formula for prime $M > 3$ was essentially contained in  \cite{Wakatsuki-prime-level}, in which Yamauchi's trace formula was also used):
\begin{align}
	|H^{\splus}_{2k} (M)| - |H^{\sminus}_{2k} (M)| = c_M h (D_{-M}) - \delta ({k, 1}),
\end{align}
where
\begin{equation}\label{1eq: defn of cM}
c_M = \left\{\begin{aligned}
&  1 / 2, && \text{ if } M   \equiv 1, 2 \, (\mathrm{mod}\, 4), \\
& 1, && \text{ if } M \equiv 7 \, (\mathrm{mod}\, 8), \\
& 2, && \text{ if } M \equiv 3 \, (\mathrm{mod}\, 8),
\end{aligned}\right.
\end{equation}
$D_{-M}$ and  $ h (D_{-M}) $, respectively, denote the discriminant and the class number of the imaginary quadratic field $\mathbb{Q} (\sqrt{- M}) $. He also proved that for $M = 2$ or $3$,
\begin{equation}
	|H^{\splus}_{2k} (2)| - |H^{\sminus}_{2k} (2)| =
	\left\{ \begin{aligned}
		&1,    && \text{ if } k\equiv 0,1 \, (\mathrm{mod} \ 4), \\
		&0,   && \text{ if } k\equiv 2, 3 \, (\mathrm{mod} \ 4),
	\end{aligned}\right.
\end{equation}
and
\begin{equation}
	|H^{\splus}_{2k} (3)| - |H^{\sminus}_{2k} (3)| =
	\left\{ \begin{aligned}
		&1,    && \text{ if } k\equiv 0,1 \, (\mathrm{mod} \ 3), \\
		&0,   && \text{ if } k\equiv 2 \, (\mathrm{mod} \ 3).
	\end{aligned}\right.
\end{equation} 
It is therefore clear that, in any fixed $H^{\star}_{2k} (M)$ (with squarefree $M > 1$), there is a {\it strict bias} of root numbers towards $+ 1$ with magnitude on the order of the class number $h (D_{-M})$.
Moreover, in view of the dimension formula for  $|H^{\star}_{2k} (M)|$ as in \eqref{1eq: |H star|}, K. Martin immediately obtained explicit formulas for $ |H^{\spm}_{2k} (M)| $. For a fundamental discriminant $D < 0$, we recall Dirichlet's class number formula
\begin{equation}\label{1eq: Dirichlet class}
L (1, \chi_D) = \frac {2\pi h (D)} {w (D) \sqrt{|D|}},
\end{equation}
and the well-known upper bound $L (1, \chi_{D }) \Lt \log |D| $ (as usual $\chi_{D }$ is the quadratic character attached to $ \mathbb{Q} (\sqrt{D}) $, $L(s, \chi_{D})$ is its Dirichlet $L$-function, $h (D)$ is the class number, and $w (D)$ is the number of units). From these we infer that $h (D_{-M}) = O (\sqrt{M} \log M)$.\footnote{Note that we also have Siegel's ineffective lower bound $h (D_{-M}) \Gt_{\vepsilon} M^{1/2 - \vepsilon}$ for any $\vepsilon > 0$.} Consequently,  the error term in \eqref{1eq: ILS, squarefree} may be improved into $O (\sqrt{M} \log M)$.\footnote{By simple considerations, one may find that the error term $O \big(2^{\omega (M)}\big)$ in \cite{Martin-Bias} is not correct ($\omega (M)$ is the number of prime factors in $M$). Actually, one has $ |H^{\star}_{2k} (M)| =   {(2k-1)}  \varphi (M) / 12 + O \big(2^{\omega (M)}\big) $ for $M$ squarefree (see \eqref{1eq: |H star|}).}

K. Martin said  in \cite{Martin-Bias} that ``the trace formula of Yamauchi is valid for arbitrary level, but becomes considerably more complicated", and that, ``in principle", his approach also works for non-squarefree level, ``but the resulting formulas may be messy".

\subsection*{Main results}

In the present paper, we use a new analytic method due to Balkanova, Frolenkov \cite{Ba-Fr-Mean}, and Shenhui Liu  \cite{Liu-First-Moment} to derive a still very ``neat" formula for $|H^{\splus}_{2k} (M)| - |H^{\sminus}_{2k} (M)|$ in the case of {\it cubic} level $M = N^3$ with $N > 1$  squarefree. As indicated in \cite{Martin-Bias}, this would possibly be
prohibitively difficult from the approach via Yamauchi's trace formula.  Instead, our approach is based on the  root-number weighted ($\Delta^{\ssharp}$-type) {\it simple} Petersson formula over $H^{\star}_{2k} (N^3)$ established in \cite{PWZ2019} (see \eqref{2eq: Petersson sharp} in Proposition \ref{prop-sPTF}).


The following is our main theorem.

\begin{thm}\label{thm-dimension-formula}
	Let notation be as above. Assume that $N > 1$ is squarefree and   $k > 1$. When $N > 3$, we have
	\begin{align}
		|H^{\splus}_{2k} (N^3)| - |H^{\sminus}_{2k} (N^3)| =
 {c_N \varphi(N) h (D_{-N})}  ,
	\end{align}
	where the constant $c_N$ is  given as in {\rm\eqref{1eq: defn of cM}}, $\varphi (N)$ is Euler's totient function, and $h(D_{-N})$ is the class number of  $\Q(\sqrt{-N})$.
	When $N=2$, we have
	\begin{equation}
	|H^{\splus}_{2k} (8)| - |H^{\sminus}_{2k} (8)| =
	\left\{ \begin{aligned}
	&0,    && \text{ if } k\equiv 0,1 \, (\mathrm{mod} \ 4), \\
	&1,   && \text{ if } k\equiv 2, 3 \, (\mathrm{mod} \ 4).
	\end{aligned}\right.
	\end{equation}
	When $N=3$, we have
	\begin{equation}
		|H^{\splus}_{2k} (27)| - |H^{\sminus}_{2k} (27)| =
		\left\{ \begin{aligned}
			&1,    && \text{ if } k\equiv 0,1 \, (\mathrm{mod} \ 3), \\
			&2,   && \text{ if } k\equiv 2 \, (\mathrm{mod} \ 3).
		\end{aligned}\right.
	\end{equation}
\end{thm}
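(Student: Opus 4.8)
The plan is to begin from the identity
\begin{align*}
|H^{\splus}_{2k}(N^3)| - |H^{\sminus}_{2k}(N^3)| = \sum_{f \in H^{\star}_{2k}(N^3)} \epsilon_f = \sum_{f \in H^{\star}_{2k}(N^3)} \epsilon_f \lambda_f(1),
\end{align*}
and to evaluate this signed sum by applying the root-number weighted simple Petersson formula of Proposition \ref{prop-sPTF} (equation \eqref{2eq: Petersson sharp}) at the single argument $n = 1$. The reason for using the $\Delta^{\ssharp}$-type \emph{simple} formula rather than the classical Petersson formula is that the latter carries an infinite tail of Kloosterman sums weighted by the $J$-Bessel function, whereas here, for $n$ small against the level $N^3$, the Bessel argument is tiny and the right-hand side collapses to one exact, finite arithmetic term. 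I expect the hypothesis $k > 1$ to be precisely what kills the residual Bessel contribution, so that the identity becomes a clean closed form; the case $k = 1$ is excluded because the weight-$2$ space carries the extra term seen in the $\delta(k,1)$ corrections of \eqref{1eq: |H star|}.

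The second step is to evaluate that arithmetic term at $n = 1$. Since the level $N^3$ is cubic and $N$ is squarefree, the relevant modulus factors over the primes $p \mid N$, and each local factor reduces to a normalized quadratic Gauss sum (equivalently a Salié-type sum evaluated in closed form). Multiplying the local contributions produces, apart from an elementary factor $\varphi(N)$, a short character sum governed by the quadratic character $\chi_{D_{-N}}$ of the imaginary quadratic field $\mathbb{Q}(\sqrt{-N})$. Conceptually this is the point at which the class number is forced to appear: the signed count over cubic level is carried by the dihedral (CM) newforms induced from characters of $\mathbb{Q}(\sqrt{-N})$, whose number is measured by $h(D_{-N})$, while the remaining newforms contribute in cancelling pairs.

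The third step converts the character sum into the class number by feeding it through Dirichlet's formula \eqref{1eq: Dirichlet class}, $L(1, \chi_{D_{-N}}) = 2\pi h(D_{-N}) / (w(D_{-N}) \sqrt{|D_{-N}|})$, which simultaneously produces $h(D_{-N})$ and fixes the rational constant $c_N$ of \eqref{1eq: defn of cM}. The dependence of $c_N$ on $N \bmod 8$ should emerge from two effects that must be tracked together: the discrepancy between $-N$ and the fundamental discriminant $D_{-N}$ (which equals $-N$ or $-4N$ according to $N \bmod 4$), and the local root number at the prime $2$, which separates $N \equiv 3$ from $N \equiv 7 \pmod{8}$; the unit factor $w(D_{-N})$ then accounts for the remaining constant.

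I expect the main obstacle to be the exact evaluation of the finite Gauss/Salié-type sum together with the bookkeeping that pins down $c_N$ with its correct $\bmod 8$ behaviour, rather than any hard analysis. The values $N = 2$ and $N = 3$ fall outside the generic computation: the fields $\mathbb{Q}(\sqrt{-2})$ and $\mathbb{Q}(\sqrt{-3})$ carry extra units, and $N$ coincides with the small ramified primes, so the local factors and the archimedean sign $(-1)^k$ no longer combine uniformly; I would treat these two cases by direct computation, and the resulting periodicity in $k \bmod 4$ (respectively $k \bmod 3$) is exactly the footprint of that archimedean sign interacting with the finite local data. Finally, pairing the difference established here with Martin's dimension formula \eqref{1eq: |H star|} for $|H^{\star}_{2k}(N^3)|$ would give explicit formulas for each of $|H^{\spm}_{2k}(N^3)|$ separately.
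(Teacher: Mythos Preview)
There is a genuine gap at the very first step. The $\Delta^{\ssharp}$-formula \eqref{2eq: Petersson sharp} does \emph{not} evaluate $\sum_f \epsilon_f \lambda_f(m)\lambda_f(n)$; by \eqref{2eq: defn Delta sharp} it evaluates the \emph{weighted} sum
\[
\Delta_{2k,N^3}^{\ssharp}(m,n)=\sum_{f\in H^{\star}_{2k}(N^3)}\epsilon_f\,\frac{\lambda_f(m)\lambda_f(n)}{L(1,\sym^2(f))}.
\]
Applying it at $m=n=1$ therefore gives $\sum_f \epsilon_f/L(1,\sym^2(f))$, not $\sum_f \epsilon_f$, and there is no simple a priori relation between the two. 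Moreover, the right-hand side of \eqref{2eq: Petersson sharp} at $m=n=1$ is the full infinite Kloosterman--Bessel sum $\sum_{(c,N)=1}S(\overbar{N}{}^3,1;c)c^{-1}J_{2k-1}(4\pi/N^{3/2}c)$; having a small Bessel argument makes each term small but does not make the series ``collapse to one exact, finite arithmetic term'', nor does $k>1$ kill it.

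The missing idea is precisely the unweighting device the paper uses: one considers
\[
M^{\ssharp}(s)=\sum_{f}\epsilon_f\,\frac{L(s,\sym^2(f))}{L(1,\sym^2(f))},
\]
so that $M^{\ssharp}(1)=\sum_f\epsilon_f$. Opening $L(s,\sym^2(f))=\zeta^{(N)}(2s)\sum_{(n,N)=1}\lambda_f(n^2)n^{-s}$ reduces to $\sum_n \Delta^{\ssharp}_{2k,N^3}(1,n^2)n^{-s}$, which is where \eqref{2eq: Petersson sharp} legitimately applies---but now with an infinite sum over $n$ and over $c$. The analysis is then carried out via the Mellin--Barnes representation \eqref{2eq: Mellin-Barnes} of $J_{2k-1}$, the Hurwitz zeta function, and Zagier's $L$-functions (Lemmas \ref{Lemma-Gauss-hyper}--\ref{Dirichlet-series-involving-Kloosterman-sum}), producing the exact identity of Theorem \ref{thm: P(s) = ...}. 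The clean answer emerges only after analytic continuation to $s=1$, where the factor $\cos(\pi s/2)$ annihilates the entire $M_2^{\ssharp}$-series and $M_1^{\ssharp}$ survives only for $N=2,3$ (that---not ``extra units''---is the source of the $k\bmod 4$ and $k\bmod 3$ periodicities, via \eqref{2eq: Gauss evaluated}). Your intuition that Dirichlet's formula \eqref{1eq: Dirichlet class} ultimately fixes $c_N$ is correct, but the route to a single $L(1,-4N)$ term goes through the Balkanova--Frolenkov--Liu machinery, not through a direct Gauss/Sali\'e evaluation at $n=1$.
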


This theorem manifests that the root numbers in $H^{\star}_{2k} (N^3)$ have a {\it strict bias}  in favor of $+ 1$ with magnitude on the order of  $ \varphi(N) h (D_{-N})$. Also note that there is a perfect equidistribution of root numbers only when $N=2$ and $k \equiv 0, 1 (\mathrm{mod}\, 4)$. 

According to \cite[Theorem 1]{Martin-Dimension}, 
for $N>1$ squarefree, we have
\begin{align}
	|H^{\star}_{2k} (N^3) | = \frac {2k-1} {12} {\varphi(N)^2 \nu (N)}  +  \bigg(\frac 1 4 + \left\lfloor \frac {k} 2 \right\rfloor - \frac {k} 2 \bigg) \delta (N, 2) +  \bigg(\frac 1 3 + \left\lfloor \frac {2k} 3 \right\rfloor - \frac {2k} 3 \bigg) \delta (N, 3),
\end{align}
with
\begin{align*}
	\nu (N) = N \prod_{p\mid N} \bigg(1+\frac 1 p \bigg).
\end{align*}
Consequently,
\begin{align}
	|H^{\spm}_{2k} (N^3)| = \frac {2k-1} {24} {\varphi(N)^2 \nu (N)} \pm \frac{c_N}{2}  \varphi(N) h (D_{-N})
\end{align}
if $N > 3$, 
\begin{equation}
|H^{\spm}_{2k} (8)|
= \frac 1 2 \left\lfloor \frac k 2 \right\rfloor  + \left\{ \begin{aligned}
&  0,  && \text{ if }  k\equiv 0, 1 \, (\mathrm{mod} \ 4), \\
&  \pm \frac 1 2,  && \text{ if }  k\equiv 2, 3 \, (\mathrm{mod} \ 4), 
\end{aligned} \right.
\end{equation}
and
\begin{equation}
|H^{\spm}_{2k} (27)|
= \frac 1 2 \left\lfloor \frac {8k} 3 \right\rfloor - \frac 1 2 + \left\{ \begin{aligned}
&  \pm \frac 1 2  ,  && \text{ if }  k\equiv 0, 1 \, (\mathrm{mod} \ 3), \\ 
& \pm 1,  && \text{ if }  k\equiv 2 \, (\mathrm{mod} \ 3).
\end{aligned} \right.
\end{equation}
From these we deduce the asymptotic formula
\begin{align}
 	|H^{\spm}_{2k} (N^3)| = \frac {2k-1} {24} {\varphi(N)^2 \nu (N)} + O \big(N^{3/2} \log N \big).
\end{align}

We conclude this Introduction with the following conjecture.

\begin{conj}
	For any $M > 1$ we have $|H^{\splus}_{2k} (M)| \geqslant |H^{\sminus}_{2k} (M)|$, and  $|H^{\splus}_{2k} (M)| - |H^{\sminus}_{2k} (M)|$ is independent on $k$ if $M$ is not divisible by $2$ or $3$ and $k > 1$. Moreover, we have the asymptotic formula
	\begin{align}
		|H^{\spm}_{2k} (M)| = \frac {2k-1} {24} M s_0^{\ssharp} (M) + O (\sqrt{M} \log M),
	\end{align}
	in which $ s_0^{\ssharp}  $  is the multiplicative function in Definition {\rm $1'$ (A)} in {\rm\cite{Martin-Dimension}},  satisfying
	\begin{equation}
		s_0^{\ssharp} (p) = 1-\frac 1 p, \quad s_0^{\ssharp} (p^2) = 1-\frac 1 p - \frac 1 {p^2}, \quad s_0^{\ssharp} (p^\valpha) = \bigg(1-\frac 1 p \bigg) \bigg(1- \frac 1 {p^2} \bigg) \quad \text{\rm($\valpha \geqslant 3$)}.
	\end{equation}
\end{conj}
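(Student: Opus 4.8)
The plan is to reduce all three assertions to the single signed count
\[
D_{2k}(M) := |H^{\splus}_{2k}(M)| - |H^{\sminus}_{2k}(M)|
= \sum_{f \in H^{\star}_{2k}(M)} \epsilon_f
= (-1)^{k}\operatorname{Tr}\!\big(W_M \mid S^{\star}_{2k}(M)\big),
\]
the last equality holding because the root number equals $(-1)^k$ times the Fricke (Atkin--Lehner-at-$M$) eigenvalue of $f$, consistent with the footnoted value $i^{2k}=(-1)^k$ at $M=1$. I would evaluate $D_{2k}(M)$ by a root-number weighted \emph{simple} Petersson formula for arbitrary level, extending the cubic-level formula of \cite{PWZ2019} that drives Proposition~\ref{prop-sPTF}. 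Concretely, for $p^{a}\,\|\,M$ one records the local Atkin--Lehner/root-number sign $\epsilon_p(f)$ and the local newvector weighting in terms of the $p$-adic Hecke data, so that inserting $\epsilon_f=(-1)^k\prod_{p\mid M}\epsilon_p(f)$ and specializing the $\Delta^{\ssharp}$-formula at $m=n=1$ expresses $D_{2k}(M)$ as a diagonal main term plus an off-diagonal term assembled from Kloosterman/Salié sums twisted by the ramified local data.

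The asymptotic formula is then immediate from the bound $D_{2k}(M)=O(\sqrt{M}\log M)$. Indeed Martin's dimension formula gives $|H^{\star}_{2k}(M)| = \frac{2k-1}{12}M s_0^{\ssharp}(M) + O_{\vepsilon}(M^{\vepsilon})$, whose main term is exactly twice the asserted main term for $|H^{\spm}_{2k}(M)|$ (note $M s_0^{\ssharp}(M)=\varphi(M)$ when $M$ is squarefree and $=\varphi(N)^2\nu(N)$ when $M=N^3$, recovering the cases proved here). Since $|H^{\spm}_{2k}(M)| = \tfrac12\big(|H^{\star}_{2k}(M)| \pm D_{2k}(M)\big)$, the $O(\sqrt M\log M)$ error of the asymptotic is carried entirely by $\pm\tfrac12 D_{2k}(M)$, and only the stated bound on $D_{2k}(M)$ is needed. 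That bound follows from the off-diagonal term: after the Salié/Kloosterman evaluation the leading contribution has size $\sqrt{M}\,L(1,\chi_{D})$, which by Dirichlet's formula \eqref{1eq: Dirichlet class} and $L(1,\chi_D)\Lt\log|D|$ is $O(\sqrt{M}\log M)$. This is the very mechanism by which $h(D_{-M})$ enters the squarefree and cubic main terms.

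For the $k$-independence when $\gcd(M,6)=1$ and $k>1$, I would exploit the identity $D_{2k}(M)=(-1)^k\operatorname{Tr}(W_M)$ together with the structure of the Eichler--Selberg/Yamauchi evaluation of the Fricke trace: its only elliptic contributions are class-number sums attached to $\mathbb{Q}(\sqrt{-M})$, whose weight-dependence is carried by a single Gegenbauer-type factor evaluated at the trace-zero (CM) contribution $t=0$. A direct computation shows that factor carries the sign $(-1)^{k-1}$, which cancels the $k$-dependence of the prefactor $(-1)^k$ and leaves a purely arithmetic, $k$-free expression. The parasitic $k$-dependent pieces are exactly the $p=2,3$ corrections appearing in \eqref{1eq: |H star|} attached to $v_2^{\ssharp},v_3^{\ssharp}$ with coefficients $\lfloor k/2\rfloor-k/2$ and $\lfloor 2k/3\rfloor-2k/3$; imposing $\gcd(M,6)=1$ removes them, so $D_{2k}(M)$ is independent of $k$.

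The main obstacle is the nonnegativity $D_{2k}(M)\geqslant 0$, which the asymptotic does \emph{not} deliver since its error term is unsigned. This forces an \emph{exact} evaluation of the main term of $D_{2k}(M)$ for every $M$ and a proof that the resulting arithmetic factor is sign-definite and dominates the bounded $k$-dependent corrections --- the delicate cases being small $M$ and the primes $2,3$. The sharpest technical input is the local analysis at prime powers $p^{a}$ with $a\geqslant 2$: one must pin down the local root number and the precise local weighting entering the simple Petersson formula for each admissible local type (unramified and ramified principal series, Steinberg, and supercuspidal), and then show that the product over $p\mid M$ of these local factors has a definite sign. I expect the cleanest confirmation of positivity to come through the Fricke-trace identity $D_{2k}(M)=(-1)^k\operatorname{Tr}(W_M)$, by interpreting the evaluated trace as a nonnegative weighted count of fixed-point/CM data for $\mathbb{Q}(\sqrt{-M})$; turning that heuristic into a proof valid for all $M>1$ and all $k$ is the crux of the conjecture.
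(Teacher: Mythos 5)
The statement you were asked to prove is labeled a \emph{Conjecture} in the paper: the authors give no proof of it, and their own method (Theorem \ref{thm-dimension-formula}) covers only the cubic levels $M=N^3$ with $N$ squarefree. Your proposal, by your own admission, is also not a proof but a research program, and its load-bearing steps are exactly the open content of the conjecture. Concretely: (i) the root-number weighted ($\Delta^{\ssharp}$-type) \emph{simple} Petersson formula you invoke for arbitrary level does not exist. The simplicity of \eqref{2eq: Petersson sharp} is tied to the fact that for level $N^3$, $N$ squarefree, every local component at $p\mid N$ is a \emph{simple supercuspidal} representation, which for $GL(2)$ occurs precisely at conductor exponent $3$; at a general prime power $p^a$ the local newform can be a ramified principal series, a twist of Steinberg, or a supercuspidal of other conductor, and no analogous clean $\Delta^{\ssharp}$-formula is known --- this is exactly what the paper, quoting K.~Martin, flags as ``prohibitively difficult'' or ``messy''. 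Hence your step of ``specializing the $\Delta^{\ssharp}$-formula at $m=n=1$'' to extract a Sali\'e/Kloosterman term of size $\sqrt{M}\,L(1,\chi_D)$ has no tool behind it; constructing that tool \emph{is} the problem. (ii) Consequently the bound $D_{2k}(M)=O(\sqrt{M}\log M)$ is unproven for general $M$; your reduction of the asymptotic to this bound via $|H^{\spm}_{2k}(M)|=\frac12\big(|H^{\star}_{2k}(M)|\pm D_{2k}(M)\big)$ and Martin's dimension formula is correct arithmetic (and your identities $Ms_0^{\ssharp}(M)=\varphi(M)$ for squarefree $M$ and $N^3 s_0^{\ssharp}(N^3)=\varphi(N)^2\nu(N)$ do check out), but it is conditional on precisely the estimate that is being conjectured.

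The other two assertions fare no better. For the $k$-independence when $\gcd(M,6)=1$, Yamauchi's trace formula for $W_M$ is indeed valid for all levels, but your claim that its only elliptic contributions are class-number sums for $\Q(\sqrt{-M})$ with the weight dependence confined to a single $t=0$ factor of sign $(-1)^{k-1}$ is justified only for squarefree $M$ (this is in effect K.~Martin's computation); for non-squarefree $M$ both the fixed-point analysis and the inclusion-exclusion needed to sieve $S^{\star}_{2k}(M)$ out of $S_{2k}(M)$ (the Fricke involution permutes the oldform copies $f(dz)$) produce additional terms whose $k$-dependence you have not controlled --- note that even in the theorem proved in this paper the difference \emph{does} depend on $k$ when $N=2,3$, so the dividing line $\gcd(M,6)=1$ must emerge from that analysis rather than be assumed. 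Finally, for the nonnegativity $D_{2k}(M)\geqslant 0$ you explicitly concede there is no argument. In short, you have organized the conjecture sensibly around the identity $D_{2k}(M)=(-1)^k\operatorname{Tr}\big(W_M\mid S^{\star}_{2k}(M)\big)$, but every essential step --- the arbitrary-level $\Delta^{\ssharp}$-formula, the square-root bound, the $k$-independence, and the positivity --- remains open, which is why the paper states this as a conjecture rather than a theorem.
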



\begin{acknowledgement}
	We would like to thank Yongxiao Lin for helpful discussions and careful readings of the manuscript. We also thank the anonymous referees for their long lists of comments. 
\end{acknowledgement}

\section{Preliminaries}
\setcounter{equation}{0}

\subsection{Notation}

Let $\varphi (n)$ be the Euler   totient function, and $\mu (n)$ be the M\"obius function. For complex $s$, define
\begin{align}\label{2eq: sigma (n)}
\sigma_{s}(n)=\sum_{d\mid n} d^{s}.
\end{align}
Let $\delta (m, n)$ be the Kronecker  $\delta$-symbol that detects $m = n$.

Set $e (z) = e^{2\pi i z}$. We define the   Kloosterman sum
\begin{align*}
S (m, n; c) = \sumx_{ a (\mathrm{mod}\, c) }  e \bigg(  \frac {a m + \overbar{a} n } c \bigg),
\end{align*}
where  {\small $\displaystyle \sumx$} restricts the summation to the primitive residue classes, and $\overbar{a}$ denotes the multiplicative inverse of $a$ modulo $c$.  We record here Weil's bound for the Kloosterman sum
\begin{align}\label{2eq: Weil}
|S (m, n; c)| \leqslant (m, n, c)^{1/2} c^{1/2} \sigma_0 (c).
\end{align}

Let $J_{\nu} (z)$ be the Bessel function of the first kind (\cite{Watson}). Henceforth, we shall be only concerned with $J_{2k-1} (x)$ for real $x > 0$ and integer $k \geqslant 1$. We have the following estimates  (see \cite[\S \S 2.11, 7.1]{Watson})
\begin{equation}\label{2eq: bounds for Bessel}
J_{2k-1} (x) \Lt_{ k } \left\{ \begin{aligned}
& x^{2k-1} , \ &&   x \leqslant 1, \\
& x^{-1/2} , \ && x > 1.
\end{aligned} \right.
\end{equation}
Moreover, the following Mellin--Barnes integral representation for the Bessel function will be used later (see \cite[\S 3.6.3]{MO-Formulas})
\begin{align}\label{2eq: Mellin-Barnes}
J_{2k-1} (x) =\frac{1}{4\pi i}\int_{(\sigma)}
\frac{\Gamma\left(k - \frac 1 2 + \frac 1 2 s \right)}
{\Gamma\left(k + \frac 1 2   - \frac 1 2 s\right)} \Big(\frac x 2 \Big)^{-s}ds,
\end{align}
for $1-2k < \sigma < 1$; by Stirling's formula, the integral is absolutely convergent if  $1-2k < \sigma < 0$.

For  $0 < \valpha \leqslant 1$, we define the Hurwitz zeta function
\begin{align}\label{2eq: Hurwitz}
\zeta (s, \valpha) = \sum_{n=0}^{\infty} \frac 1 {(n+\valpha)^s}, \quad \mathrm{Re} (s) > 1.
\end{align}
We recollect several basic results on  $\zeta (s, \valpha)$ from Theorems 12.4, 12.6, and 12.23 in \cite{Apostol}.  First of all, $\zeta (s, \valpha)$ has an analytic continuation to the whole complex plane except for a simple pole at $s = 1$ with residue $1$. We have Hurwitz's formula
\begin{align}\label{2eq: Hurwitz formula}
\zeta (1-s, \valpha) = \frac {\Gamma (s)} {(2\pi)^s} \big(e (-s/4) F (\valpha, s) + e (s/4) F (- \valpha, s) \big), \quad \mathrm{Re}(s) > 1,
\end{align}
with
\begin{align}\label{2eq: defn of F(b, s)}
F (\beta, s) = \sum_{n=1}^{\infty} \frac {e(n\beta)} {n^{s}} ,
\end{align}
where $\beta$ is real and $\mathrm{Re}(s) > 1$.  Moreover, we have the following crude bound: 
\begin{equation}\label{2eq: bounds for zeta}
\zeta (s, \valpha) - \valpha^{-s} \Lt | \mathrm{Im}(s) |^{3/2 - \lfloor \mathrm{Re}(s) + 1/2 \rfloor } 
\end{equation}
for $\mathrm{Re}(s) < 3/2$ and $ | \mathrm{Im}(s) | > 1 $.

\subsection{Simple Petersson formulas over  $H^{\star}_{2k} (N^3)$}

We  introduce the symmetric square $L$-function for $f \in H^{\star}_{2k} (N^3)$:
\begin{align}
	L(s, \sym^2(f))=\zeta^{(N)}(2s)\mathop {\sum_{ (n,N)=1}} \frac{\lambda_f(n^2)}{n^s},\quad \mathrm{Re}(s)>1,\label{eq-symmetric-square}
\end{align}
where $\lambda_f(n)$ are the Hecke eigenvalues of $f$, and $\zeta^{(N)}(s)$ is the partial Riemann zeta-function defined by
\begin{align*}
	\zeta^{(N)}(s) = \prod_{p \hskip 1pt \nmid N} \frac 1 {1 - p^{-s}}, \quad \mathrm{Re}(s)>1.
\end{align*}
It is known  that $L(s, \sym^2(f))$ admits a holomorphic continuation to the entire complex plane.

We can now state the two types of simple Petersson formulas in \cite{PWZ2019}.

\begin{prop}\label{prop-sPTF}
	Assume that $N > 1$ is  squarefree  and $k> 1$. Set
	\begin{align}\label{2eq: defn Delta *}
		\Delta_{2k, N^3}^{\star}(m,n   ) & = \sum_{f\in H^{\star}_{2k} (N^3)}\frac{\lambda_f(m)\lambda_f(n   )}{L(1, \sym^2(f) )},
	\end{align}
	and
	\begin{align} \label{2eq: defn Delta sharp}
	\Delta_{2k, N^3}^{\ssharp}(m,n   ) = \sum_{f\in H^{\star}_{2k} (N^3)} \epsilon_f \frac{\lambda_f(m)\lambda_f(n   )}{L(1, \sym^2(f))}.
	\end{align}
	For  $(mn   ,N)=1$, we have
	\begin{equation}\label{2eq: Petersson *}
		\begin{split}
			\Delta_{2k, N^3}^{\star}(m,n   )= & \, \delta ({ m,n   }) \frac{(2k-1)N^2\varphi(N)}{2\pi^2}\\
			&  +\frac{(-1)^k(2k-1)}{\pi}
			\sum_{c= 1}^{\infty} \frac{A_{N}(c)}{c} S(m,n   ;N^2c) J_{2k-1}\left(\frac{4\pi\sqrt{mn   }}{N^2c}\right)
			,
		\end{split}
	\end{equation}
	and
	\begin{align}\label{2eq: Petersson sharp}
		\Delta_{2k, N^3}^{\ssharp}(m,n   )
		=&\frac{(2k-1)N^{3/2}}{\pi}
		\sum_{ (c,N)=1}\frac{S\big(\overbar{ N}{}^3m,n   ;c\big)}{c}
		J_{2k-1}\left(\frac{4\pi\sqrt{mn   }}{N^{3/2}c}\right),
	\end{align}
	where
	$A_{N}(c)=\prod_{p\mid N}A_{p}(c)$ with
	\begin{equation*}
		A_{p}(c)=\left\{ \begin{aligned}
			&-1,  \ && \text{ if } p\nmid c,\\
			&p-1,  \ && \text{ if } p\mid c,
		\end{aligned}\right.
	\end{equation*}
	and $ \overbar{ N}$ is the multiplicative inverse of $N$ modulo $c$. 
\end{prop}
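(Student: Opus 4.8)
The plan is to derive both identities from the classical Petersson formula over a \emph{full} orthogonal basis of $S_{2k}(N^3)$, after two reductions: (a) converting the spectral (harmonic) weight into the weight $1/L(1,\sym^2 f)$ of \eqref{2eq: defn Delta *}--\eqref{2eq: defn Delta sharp}, and (b) sieving the full basis down to the newforms of exact level $N^3$. The root--number--weighted formula \eqref{2eq: Petersson sharp} is then extracted from the machinery behind \eqref{2eq: Petersson *} by inserting the Fricke involution $W_{N^3}$. I would begin by recording two structural facts that make the symmetric--square normalization clean. Because every $f\in H^{\star}_{2k}(N^3)$ has conductor exactly $p^3$ at each $p\mid N$, one has $\lambda_f(p)=0$ for all $p\mid N$, so the Hecke relations at such primes degenerate and the arithmetic at primes dividing $N$ is uniform across the family. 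Next, Rankin--Selberg theory (Shimura's integral representation) expresses the Petersson norm as $\langle f,f\rangle = c(k)\,(\text{local factors at }p\mid N)\,L(1,\sym^2 f)$, with $L(s,\sym^2 f)$ the \emph{partial} symmetric square of \eqref{eq-symmetric-square}; using $\lambda_f(p)=0$ one checks that the omitted Euler factors at $p\mid N$ contribute a constant depending only on $N$ and $k$. Hence the harmonic weight $\omega_f=\Gamma(2k-1)/((4\pi)^{2k-1}\langle f,f\rangle)$ satisfies $\omega_f=C_{N,k}/L(1,\sym^2 f)$ with $C_{N,k}$ independent of $f$. This converts any harmonic--weight trace into a $1/L(1,\sym^2 f)$--weight trace, and in particular turns the spectral diagonal into the explicit constant $(2k-1)N^2\varphi(N)/(2\pi^2)$ in \eqref{2eq: Petersson *}.

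With this conversion in hand, I would start from the Petersson formula over an orthogonal basis $\mathcal B$ of the full space $S_{2k}(N^3)$, whose Kloosterman side runs over moduli divisible by $N^3$. Following the orthogonalization of oldforms in the style of Iwaniec--Luo--Sarnak, and exploiting that $(mn,N)=1$ annihilates the shifted oldform coefficients except for the unshifted copy (since any shift $d\mid N^3/M$ is supported on primes dividing $N$), I would perform a M\"obius sieve over the divisors of $N$ to isolate the newforms of exact level $N^3$; the sieve factors multiplicatively because $N$ is squarefree. The local sieve at each $p\mid N$ replaces the condition $p^3\mid c$ by the weight $A_p(c)$ --- equal to $p-1$ when $p\mid c$ and $-1$ otherwise --- and simultaneously lowers the modulus from $N^3c$ to $N^2c$; assembling the primes produces $A_N(c)=\prod_{p\mid N}A_p(c)$ together with the Kloosterman modulus $N^2c$. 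Multiplying through by $C_{N,k}$ and matching constants yields \eqref{2eq: Petersson *}.

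For \eqref{2eq: Petersson sharp} I would insert the Fricke involution. Realizing the harmonic trace through Poincar\'e series $P_m\in S_{2k}(N^3)$, I would replace $P_m$ by $P_m|W_{N^3}$; since $f|W_{N^3}=\eta_f f$ with $\eta_f=(-1)^k\epsilon_f$, the spectral side acquires exactly the factor $\epsilon_f$ (the $(-1)^k$ being absorbed), producing $\Delta^{\ssharp}_{2k,N^3}(m,n)$. On the geometric side, computing the Fourier coefficients of $P_m|W_{N^3}$ requires a Bruhat decomposition of the double coset $\Gamma_0(N^3)W_{N^3}\Gamma_0(N^3)$; this turns the unipotent contribution into a genuine Kloosterman term, so the $\delta(m,n)$ term disappears, twists the first entry by $\overbar{N}{}^3$, and forces the coprimality $(c,N)=1$ through the support of the resulting moduli. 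Tracking the scaling through the involution then yields the modulus $N^{3/2}c$, the Bessel argument $4\pi\sqrt{mn}/(N^{3/2}c)$, and the constant $(2k-1)N^{3/2}/\pi$ in \eqref{2eq: Petersson sharp}.

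The main obstacle is the sieving bookkeeping of the second paragraph: constructing the orthogonal basis adapted to the oldform tower at each $p\mid N$ and verifying that the local projections produce \emph{precisely} the weights $A_p(c)$ together with the reduction of the modulus by one power of $p$. A secondary delicate point is confirming that the omitted symmetric--square Euler factors at $p\mid N$ are genuinely $f$--independent, so that the single constant $C_{N,k}$ governs the whole family; this is exactly where the vanishing $\lambda_f(p)=0$ is indispensable. By comparison, the Fricke coset computation, though technical, is routine once the representatives of $\Gamma_0(N^3)W_{N^3}\Gamma_0(N^3)$ are fixed.
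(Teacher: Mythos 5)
Your proposal takes a genuinely different route from the paper's. Proposition \ref{prop-sPTF} is not proved in this paper at all: it is quoted from \cite{PWZ2019}, where both identities come out of an adelic simple (relative) trace formula whose test functions at the primes $p \mid N$ are matrix coefficients of simple supercuspidal representations; that choice performs the projection onto newforms of exact level $N^3$ automatically (no oldform sieving whatsoever), and it is also the source of the hypothesis $k>1$ (integrability of the archimedean matrix coefficient). Your plan for \eqref{2eq: Petersson *} --- classical Petersson formula over the full space, conversion of $\langle f, f\rangle$ into $L(1,\sym^2(f))$ via Rankin--Selberg together with $\lambda_f(p)=0$ for $p \mid N$, then an ILS-type M\"obius sieve over levels --- is workable: the paper itself remarks that \eqref{2eq: Petersson *} is deducible, after simplifications, from the arbitrary-level $\Delta^{\star}$-formulas of \cite{Ng-Petersson,Nelson-Petersson,Miller-1-level-density}.

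However, your derivation of \eqref{2eq: Petersson sharp} has a genuine gap. Replacing $P_m$ by $P_m|W_{N^3}$ does \emph{not} make the spectral side equal to $\Delta^{\ssharp}_{2k,N^3}(m,n)$: the spectral expansion still runs over all of $S_{2k}(N^3)$, and $W_{N^3}$ acts as the scalar $\eta_f=(-1)^k\epsilon_f$ only on newforms of exact level $N^3$. On an oldclass $\{f(dz): d\mid L\}$ with $f\in H^{\star}_{2k}(M)$, $LM=N^3$, $L>1$, the Fricke involution permutes the shifts, $f(dz)|W_{N^3} \propto (f|W_M)\big((L/d)z\big)$; hence, even with $(mn,N)=1$, the class contributes a term proportional to $(G^{-1})_{L,1}\,\lambda_f(m)\lambda_f(n)$, where $G=\big(\langle f(dz), f(d'z)\rangle\big)_{d,d'\mid L}$ is the Gram matrix --- coprimality kills every pair except ($d=L$ on the Fricke side, $d'=1$ on the other), but it does not touch that one cross term. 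There is no a priori reason for $(G^{-1})_{L,1}$ to vanish, and indeed the analogous entry at level $p^2$ does \emph{not} vanish, so level-one oldforms genuinely pollute the Fricke-twisted Petersson formula there. What rescues cubic level is a special structural fact that your proof must identify and establish, prime by prime (the Gram matrices factor over $p\mid N$): if $v_p(M)=2$ the local block is diagonal because $\lambda_f(p)=0$; if $v_p(M)=1$ the $U_p$-eigenvalue makes $\langle f, f(p^l z)\rangle$ geometric in $l$, so the block has the form $x_{\min(i,j)}\hskip 1pt y_{\max(i,j)}$ and its inverse is tridiagonal, killing the corner entry; if $v_p(M)=0$ one needs an explicit cofactor identity for the $4\times 4$ block, which does hold but only after computing the inner products $\langle f, f(p^l z)\rangle$ exactly. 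Until this class-by-class vanishing (or an equivalent sieve adapted to the $W_{N^3}$-twisted Gram matrices) is proved, the oldform contribution sits unaccounted on your spectral side. This, and not the Bruhat/coset computation you single out as the remaining technical step, is the real crux --- it is precisely why the paper describes the $\Delta^{\ssharp}$-formula as novel rather than deducible from the existing literature.
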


The meaning of the adjective ``simple" is threefold. First, the formulas are closely related to the {\it simple} trace formula of Deligne and Kazhdan in the early 1980's, in which a supercuspidal matrix coefficient is used (see \cite{Gelbart-AStrace}).  Second, for each prime $p | N$, the local component $\pi_p$ associated to $f \in H^{\star}_{2k} (N^3)$ is known to be a {\it simple} supercuspidal
representation  (see \cite{G-R-simple,Simple-GL(n)-1-KL}). Third,    the above Petersson formulas of cubic level $M=N^3$ look quite {\it simple} compared to those of squarefree level $M > 1$  as in \cite{ILS-LLZ}. 

The $\Delta^{\star}$-type Petersson formula  in \cite{ILS-LLZ} has been generalized in a similar manner to the case of arbitrary level by  Ng, Nelson, and Barrett et. al. \cite{Ng-Petersson,Nelson-Petersson, Miller-1-level-density} (see also \cite{ILS-LLZ,Rouymi,Bl-Mi-Second-Moment}). The $\Delta^{\star}$-type formula \eqref{2eq: Petersson *} is deducible from theirs after simplifications, but the $\Delta^{\ssharp}$-type formula in \eqref{2eq: Petersson sharp} is novel.

Finally, we  comment that 
the condition $k> 1$ is required in the proof in \cite{PWZ2019} so that a certain matrix coefficient of $\pi_\infty$ is integrable.  With additional efforts, their proof might still be carried out for $k=1$.

\subsection{A Mellin--Barnes type integral and Gauss' hypergeometric function}

As in \cite[(5-2)]{Ba-Fr-Mean} and \cite[\S 2.5]{Liu-First-Moment}, for  $1/2 < \mathrm{Re} (s) < 2k$, we define the integral
\begin{equation}\label{I-z-x}
	I_{k, s}(x)=
	\frac{1}{2\pi i}\int_{(\sigma )}
	\frac{\Gamma\left(k - \frac 1 2 + \frac 1 2 w \right)}
	{\Gamma\left(k + \frac 1 2   - \frac 1 2 w\right)}
	\Gamma({1-s-w})
	\sin \Big( \pi \frac {s+w} 2 \Big)  x^w d w, \quad  \text{($x > 0$)},
\end{equation}
with $ 1-2k< \sigma  < 1-\mathrm{Re} (s) $. We have the following explicit formulas for $I_{k, s}(x)$ involving Gauss' hypergeometric function. See \cite[Lemmas 5.2--5.4]{Ba-Fr-Mean} and \cite[Lemma 2]{Liu-First-Moment} (the formulation of the latter is simpler but equivalent to that of the former via some identities for the gamma function).

\begin{lemma}\label{Lemma-Gauss-hyper}
Let $x > 0$. Assume that $1/2 < \mathrm{Re}(s) < 2k $.	We have
\begin{equation}
I_{k, s}(x) = \left\{ \begin{aligned}
& {2 (-1)^k} \cos \Big(\frac{\pi s}{2} \Big)
\frac{\Gamma (2k-s)}{\Gamma(2k)} x^{1-2k}
{_2F_1}\bigg(k-\frac{s}{2},
k+\frac{1}{2}-\frac{s}{2};
2k;\frac{4}{x^2}\bigg), && \text{ if } x > 2, \\
& \frac{ (-1)^k 2^s}{\sqrt{\pi}}\cos \Big(\frac{\pi s}{2} \Big)
\frac {\Gamma  (2k -s ) \Gamma  \big(s- \frac{1} {2}  \big)} {\Gamma (2k+s-1)}, && \text{ if } x = 2, \\
& \frac{\Gamma\big(k -   \frac 1 2 s \big)}
{\Gamma \big(k  + \frac 1 2 s\big)}  x^{1-s}
{_2F_1} \bigg(k-\frac{s}{2},1-k-\frac{s}{2}; \frac{1}{2};\frac{x^2}{4} \bigg) , && \text{ if } x < 2,
\end{aligned} \right.
\end{equation}
in which ${_2F_1} (\valpha, \beta; \gamma; z)$ is Gauss' hypergeometric function. 

\end{lemma}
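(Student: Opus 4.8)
The plan is to evaluate $I_{k,s}(x)$ in \eqref{I-z-x} by shifting the contour and summing residues, after first stripping off the trigonometric factor. First I would apply the reflection and duplication formulas for the Gamma function to rewrite, with $z=s+w$,
\[
\Gamma(1-s-w)\sin\Big(\pi\tfrac{s+w}{2}\Big)=\sqrt{\pi}\,2^{-(s+w)}\frac{\Gamma\big(\tfrac{1-s-w}{2}\big)}{\Gamma\big(\tfrac{s+w}{2}\big)},
\]
so that
\[
I_{k,s}(x)=\sqrt{\pi}\,2^{-s}\cdot\frac{1}{2\pi i}\int_{(\sigma)}\frac{\Gamma\big(k-\tfrac12+\tfrac w2\big)\,\Gamma\big(\tfrac{1-s-w}{2}\big)}{\Gamma\big(k+\tfrac12-\tfrac w2\big)\,\Gamma\big(\tfrac{s+w}{2}\big)}\Big(\tfrac x2\Big)^{w}\,dw.
\]
The point of this step is that the integrand becomes a clean quotient of four Gamma functions: its only poles are the simple ones at $w=1-2k-2n$ (from $\Gamma(k-\tfrac12+\tfrac w2)$, left of the contour) and at $w=1-s+2m$ (from $\Gamma(\tfrac{1-s-w}{2})$, right of it), $n,m\geqslant 0$, the spurious poles of $\Gamma(1-s-w)$ at even $s+w$ having been cancelled by the zeros of the sine.

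Next I would use Stirling's formula to show that the ``$+\tfrac w2$'' and ``$-\tfrac w2$'' Gamma pairs balance, leaving an integrand that decays like $|w|^{-1/2-\mathrm{Re}(s)}(x/2)^{\mathrm{Re}(w)}$ along vertical lines away from poles. Since $\mathrm{Re}(s)>1/2$, this gives absolute convergence of the line integral and justifies moving the contour: for $x<2$ the factor $(x/2)^{w}$ decays as $\mathrm{Re}(w)\to+\infty$, so I push to the right and collect the residues at $w=1-s+2m$; for $x>2$ I push to the left and collect those at $w=1-2k-2n$, the horizontal and far-vertical contributions vanishing in each case. A direct residue computation — using $\sin\big(\pi(\tfrac12+\tfrac s2-k-n)\big)=(-1)^{k+n}\cos(\tfrac{\pi s}{2})$ and the reflection formula to convert the denominator Gamma of negative argument into a numerator Gamma — turns each residue series into a Gauss series. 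For $x>2$, with $a=k-\tfrac s2$, $b=k+\tfrac12-\tfrac s2$, $c=2k$, $z=4/x^{2}$,
\[
\sum_{n\geqslant0}\frac{\Gamma(a+n)\Gamma(b+n)}{n!\,\Gamma(c+n)}z^{n}=\frac{\Gamma(a)\Gamma(b)}{\Gamma(c)}\,{}_2F_1(a,b;c;z),
\]
and for $x<2$ one obtains instead ${}_2F_1\big(k-\tfrac s2,1-k-\tfrac s2;\tfrac12;\tfrac{x^2}{4}\big)$. The leftover constants are reconciled by the duplication formula $\Gamma(k-\tfrac s2)\Gamma(k+\tfrac12-\tfrac s2)=2^{\,1-2k+s}\sqrt{\pi}\,\Gamma(2k-s)$ (for $x>2$) and by $\Gamma(k+\tfrac s2)^{-1}=(-1)^{k}\pi^{-1}\sin(\tfrac{\pi s}{2})\Gamma(1-k-\tfrac s2)$ (for $x<2$), which reproduce exactly the prefactors in the statement.

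For the boundary case $x=2$ I would avoid redoing the residue calculation and argue by continuity. The line integral converges locally uniformly in $x>0$, so $I_{k,s}$ is continuous at $x=2$ and $I_{k,s}(2)=\lim_{x\to 2^{+}}I_{k,s}(x)$. Here $c-a-b=s-\tfrac12$ has positive real part, so Gauss's theorem ${}_2F_1(a,b;c;1)=\Gamma(c)\Gamma(c-a-b)/\big(\Gamma(c-a)\Gamma(c-b)\big)$ applies as $z=4/x^{2}\to 1^{-}$ (the series being continuous up to $z=1$ by Abel's theorem), giving ${}_2F_1\big(k-\tfrac s2,k+\tfrac12-\tfrac s2;2k;1\big)=\Gamma(2k)\Gamma(s-\tfrac12)/\big(\Gamma(k+\tfrac s2)\Gamma(k-\tfrac12+\tfrac s2)\big)$; a final use of the duplication formula on $\Gamma(k+\tfrac s2)\Gamma(k-\tfrac12+\tfrac s2)$ collapses the $x>2$ expression to the stated closed form. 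I expect the main obstacle to be the rigorous justification of the contour shift — bounding the four-Gamma quotient uniformly in the relevant half-plane and showing the horizontal and far-vertical parts vanish — together with the careful bookkeeping of signs and Gamma-function identities needed to make the prefactors come out precisely.
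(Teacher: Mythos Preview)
Your proposal is correct. The paper does not actually prove this lemma: it simply records the formulas and cites \cite[Lemmas 5.2--5.4]{Ba-Fr-Mean} and \cite[Lemma 2]{Liu-First-Moment}. Your argument---rewriting $\Gamma(1-s-w)\sin(\pi(s+w)/2)$ via duplication/reflection, then shifting the Barnes-type contour to the right or left according as $x<2$ or $x>2$, summing the resulting residue series into ${}_2F_1$, and handling $x=2$ by continuity and Gauss's summation theorem---is exactly the standard route and is how those references proceed as well. So there is nothing genuinely different here; you have supplied the details that the paper outsources.
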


Recall that for $|z| < 1$, ${_2F_1} (\valpha, \beta; \gamma; z)$ is defined by Gauss' hypergeometric series
\begin{align}\label{2eq: Gauss hypergeometric}
{_2F_1} (\valpha, \beta; \gamma; z) = \frac {\Gamma (\gamma)} {\Gamma (\valpha) \Gamma (\beta)} \sum_{n=0}^{\infty} \frac {\Gamma (\valpha+n) \Gamma (\beta+n)} {\Gamma (\gamma+n) n!} z^n .
\end{align}
It is known (see \cite[\S 2.1]{MO-Formulas}) that
\begin{equation}\label{2eq: Gauss evaluated}
 {_2F_1} \Big(\frac{\valpha}{2},-\frac{\valpha}{2}; \frac{1}{2};\sin^2 \theta \Big)=\cos(\valpha \theta).
\end{equation}

\subsection{Zagier's $L$-functions}
For integers $c $ and $\varDelta $, with $c \geqslant 1$, let
$$
\rho (c, \varDelta)=\# \big\{ a \, (\mathrm{mod} \, 2 c) : a^2\equiv \varDelta \, (\mathrm{mod} \, 4 c) \big\}.
$$
Clearly, $\rho (c, \varDelta) =0$ if $\varDelta \equiv 2,3 \, (\mathrm{mod} \, 4)$.
For $\mathrm{Re} (s) > 1$, define the $L$-function
\begin{equation}
  L(s, \varDelta)= \frac{\zeta(2 s)}{\zeta(s)} \sum_{c= 1}^{\infty} \frac {\rho (c, \varDelta)}{c^s}.
\end{equation}


We recollect some fundamental results of Zagier \cite[Proposition 3]{Zagier} as follows.

\begin{lemma}\label{lemma-Zagier}

The $L$-function $ L(s, \varDelta)$ has an analytic continuation to the whole complex plane, which is entire except for a simple pole at  $s=1$ when $\varDelta$ is a square.
More precisely, $ L(s,\varDelta)$ can be expressed in terms of standard Dirichlet series{\rm:}
	\begin{equation}\label{2eq: L(s, Delta) = ...}
 L(s, \varDelta) = \left\{\begin{aligned}
& 0, && \text{ if } \varDelta \equiv 2,3 \, (\mathrm{mod} \, 4), \\
& \zeta (2s-1), && \text{ if } \varDelta = 0, \\
& L(s,\chi_D)
 \sum_{d| q} \mu(d) \chi_D(d) \frac
 {\sigma_{1-2s}(q/d)} {d^s} , &&  \text{ if } \varDelta \equiv 0,1 \, (\mathrm{mod} \, 4), \varDelta \neq 0,
\end{aligned} \right.
	\end{equation}
where in the last case we have written $\varDelta=D q^2$ with $D$  the discriminant of $\Q(\sqrt{\varDelta})$,  $\chi_{D}  = \left(\hskip -0.7pt \text{\Large $\frac{D}{\cdot}$}\hskip -0.7pt\right)$ is the Kronecker symbol, $L(s,\chi_D)$ is the Dirichlet $L$-function of $\chi_D,$ and $\sigma_{1-2s}$ is defined as in {\rm\eqref{2eq: sigma (n)}}.
	
\delete{	
	{\rm(3)}  For $\varDelta<0$, the value of $ L (s, \varDelta)$ at $s = 1$ is given by\footnote{The formula in Proposition 3 (iv) in \cite{Zagier} should read:
	\begin{align*}
	 L (1, \varDelta) = \frac {\pi} {\sqrt{|\varDelta|}} L(0, \varDelta) = \frac {\pi} {\sqrt{|\varDelta|}} H (|\varDelta|).
	\end{align*}}
	\begin{equation}
	\label{2eq: L(1, Delta)}
	L(1,\varDelta)=\frac{2\pi}{\sqrt{|\varDelta|}}
	\mathop{  \sum_{  d^2| \hskip 1pt |\varDelta|} }_{\varDelta/d^2\equiv 0,1 \, (\mathrm{mod}\, 4)} \frac{h(\varDelta/d^2)}{w(\varDelta/d^2)}.
	\end{equation}
}
	
\end{lemma}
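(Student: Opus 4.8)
The plan is to reduce the statement to a local computation of the Dirichlet series $Z(s, \varDelta) := \sum_{c=1}^{\infty} \rho(c, \varDelta) c^{-s}$, which converges absolutely for $\mathrm{Re}(s) > 1$ because $\rho(c, \varDelta) \Lt_{\varDelta, \varepsilon} c^{\varepsilon}$. The two degenerate cases are immediate. Since every square is $\equiv 0, 1 \pmod 4$, the congruence $a^2 \equiv \varDelta \pmod{4c}$ forces $\rho(c, \varDelta) = 0$ for all $c$ when $\varDelta \equiv 2, 3 \pmod 4$, whence $L(s, \varDelta) = 0$; and for $\varDelta = 0$ a direct count of $a \pmod{2c}$ with $4c \mid a^2$ gives $Z(s, 0) = \zeta(s) \zeta(2s-1) / \zeta(2s)$, so that $L(s, 0) = \zeta(2s-1)$. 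For the main case $\varDelta \equiv 0, 1 \pmod 4$ with $\varDelta \neq 0$, I would first check that $c \mapsto \rho(c, \varDelta)$ is multiplicative: writing $c = 2^{e} c'$ with $c'$ odd and applying the Chinese Remainder Theorem to the modulus $4c = 2^{e+2} c'$ — and using that $a^2 \bmod 4$ depends only on $a \bmod 2$, so that $\varDelta \equiv 0, 1 \pmod 4$ merely fixes the parity of $a$ — one obtains $\rho(c, \varDelta) = \rho(2^{e}, \varDelta) \prod_{p \mid c'} N_p(e_p)$, where $N_p(j) := \#\{ a \bmod p^j : a^2 \equiv \varDelta \pmod{p^j} \}$ and $e_p = v_p(c)$. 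This yields the Euler product $Z(s, \varDelta) = Z_2(s) \prod_{p \, \mathrm{odd}} Z_p(s)$ with $Z_p(s) = \sum_{j \geqslant 0} N_p(j) p^{-js}$.

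Next I would evaluate each factor $Z_p(s)$ using the classical description of the solutions of $a^2 \equiv \varDelta \pmod{p^j}$ in terms of $v = v_p(\varDelta)$: one has $N_p(j) = p^{\lfloor j/2 \rfloor}$ for $j \leqslant v$, while for $j > v$ the count vanishes when $v$ is odd (the ramified case $p \mid D$) and equals $p^{v/2} \big( 1 + \big(\tfrac{\varDelta/p^{v}}{p}\big) \big)$ when $v$ is even. Summing these series and multiplying by the local factor $(1 + p^{-s})^{-1}$ of $\zeta(2s)/\zeta(s) = \prod_p (1 + p^{-s})^{-1}$, I expect the unramified factor at odd $p \nmid \varDelta$ to collapse to $(1 - \chi_D(p) p^{-s})^{-1}$, where $\chi_D(p) = \big(\tfrac{\varDelta}{p}\big)$. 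At odd $p \mid q$ (with $p \nmid D$) and at $p \mid D$ the same bookkeeping should reproduce exactly the $p$-local factor of $L(s, \chi_D) \sum_{d \mid q} \mu(d) \chi_D(d) \sigma_{1-2s}(q/d) d^{-s}$, which is multiplicative in $q$ and equals $(1 - \chi_D(p) p^{-s})^{-1} \big[ \sigma_{1-2s}(p^{w}) - \chi_D(p) p^{-s} \sigma_{1-2s}(p^{w-1}) \big]$ with $w = v_p(q)$; indeed, the even-valuation and ramified computations collapse cleanly (for instance, when $p \mid D$ one finds $Z_p(s) = (1 + p^{-s}) \sigma_{1-2s}(p^{w})$, matching since $\chi_D(p) = 0$). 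The genuinely delicate point is the prime $2$: because the modulus is $4c$ rather than $2c$, the factor $Z_2(s)$ must be computed by hand, splitting into the cases $v_2(D) \in \{0, 2, 3\}$ and tracking $D \bmod 8$, and one must verify that it matches the $2$-local factor of the claimed product in every case. This $2$-adic analysis, together with the even-versus-odd dichotomy in $v$ at odd primes, is where I expect the real work to lie.

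Finally, assembling the local factors establishes the identity \eqref{2eq: L(s, Delta) = ...} as an equality of Dirichlet series in the region $\mathrm{Re}(s) > 1$. The claimed analytic continuation and the location of the pole would then follow at once: the inner sum $\sum_{d \mid q} \mu(d) \chi_D(d) \sigma_{1-2s}(q/d) d^{-s}$ is a finite Dirichlet polynomial, hence entire, while $L(s, \chi_D)$ is entire precisely when $\chi_D$ is nontrivial. Since $\varDelta = D q^2$ is a perfect square if and only if the fundamental discriminant $D$ equals $1$ — equivalently, $\chi_D$ is trivial and $L(s, \chi_D) = \zeta(s)$ — the product continues to an entire function except for a simple pole at $s = 1$ exactly when $\varDelta$ is a nonzero square. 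Together with the case $\varDelta = 0$, where $L(s, 0) = \zeta(2s-1)$ again has a simple pole at $s = 1$, this would confirm that $L(s, \varDelta)$ is entire apart from a simple pole at $s = 1$ whenever $\varDelta$ is a square, completing the proof.
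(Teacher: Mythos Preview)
The paper does not supply its own proof of this lemma: it is quoted verbatim as a ``recollection'' of Zagier's Proposition~3, with no argument beyond the citation. Your proposal therefore goes well beyond what the paper does. The route you outline --- establishing multiplicativity of $c \mapsto \rho(c,\varDelta)$ via the Chinese Remainder Theorem (with the correct handling of the modulus $4c$ versus $2c$), computing each local Euler factor $Z_p(s)$ from the standard count of square roots modulo prime powers, and then matching these against the $p$-local factors of $L(s,\chi_D)\sum_{d\mid q}\mu(d)\chi_D(d)\sigma_{1-2s}(q/d)d^{-s}$ --- is exactly Zagier's own argument, and your sketch is sound. The analytic continuation step at the end is also correct as stated. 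The only caveat is the one you flag yourself: the $2$-adic bookkeeping (splitting on $v_2(D)\in\{0,2,3\}$ and on $D\bmod 8$) is genuinely tedious and is where any slip would occur, so in a full write-up that case deserves explicit verification rather than the phrase ``should match''.
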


\delete{
\begin{prop}[Props 3.4.5 and 3.5.1 in \cite{BuVo2007}]
For $\varDelta\equiv 0,1\bmod 4$,
$\rho(q;\varDelta)$ is a multiplicative function in $q$. Moreover,
\bit
\item for $p\nmid \varDelta$, and $e\geqslant 1$, we have
\bna
\rho(p^e;\varDelta)=1+\chi_{\varDelta}(p),
\ena
\item for $p\mid\varDelta$, $p\nmid f$ and $e\geqslant 1$,
\bna
\rho(p^e;\varDelta)=\left\{
\begin{aligned}
&1,\quad &&e=1\\
&0,\quad &&e\geqslant 2
\end{aligned}
\right.
\ena
\eit
\end{prop}
}

\begin{lemma}\label{Dirichlet-series-involving-Kloosterman-sum}
Let $N, m, n$ be   integers, with $N \geqslant 1$.
For $\mathrm{Re} (s) > 3/2$, we have
\begin{equation}\label{3eq: Kloosterman = L}
\sum_{(c, N) = 1} \frac{1}{c^{1+s}}\sum_{a (\mathrm{mod}\, c)}
S(m,a^2;c)e\Big( \frac{a n}{c}\Big)=\frac{ L^{(N)}(s,n^2-4m)}{\zeta^{(N)}(2 s)},
\end{equation}
with
\begin{align}
L^{(N)} (s, \varDelta) = \frac{\zeta^{(N)}(2 s)}{\zeta^{(N)}(s)} \sum_{(c, N) = 1} \frac {\rho (c, \varDelta)}{c^s}.
\end{align}
\end{lemma}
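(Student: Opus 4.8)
The plan is to evaluate the inner sum over $a$ in closed form and then identify the resulting Dirichlet series with an Euler product. Put $\varDelta = n^2 - 4m$, and observe that $\varDelta \equiv n^2 \equiv 0, 1 \pmod 4$ automatically, so that $\rho(c, \varDelta)$ is not identically zero. Writing
\[
T(c) = \sum_{a\,(\mathrm{mod}\, c)} S(m, a^2; c)\, e\Big(\tfrac{an}{c}\Big),
\]
the left-hand side of \eqref{3eq: Kloosterman = L} is $\sum_{(c,N)=1} T(c)/c^{1+s}$. Weil's bound \eqref{2eq: Weil} gives $T(c) \ll_{\varepsilon} c^{3/2+\varepsilon}$, so this series converges absolutely for $\mathrm{Re}(s) > 3/2$; this both accounts for the stated range and legitimises the rearrangements below. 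Since $1/\zeta^{(N)}(s) = \sum_{(\ell,N)=1} \mu(\ell)\,\ell^{-s}$ and the right-hand side of \eqref{3eq: Kloosterman = L} equals $\zeta^{(N)}(s)^{-1}\sum_{(c,N)=1} \rho(c,\varDelta)/c^s$, comparing Dirichlet coefficients reduces the claim to the purely arithmetic identity
\[
\sum_{d\mid c} \frac{T(d)}{d} = \rho(c, \varDelta), \qquad (c, N) = 1,
\]
which, as neither side depends on $N$, I would establish for every $c \geqslant 1$.

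The key computation is the evaluation of $T(c)$. Opening the Kloosterman sum as $S(m, a^2; c) = \sumx_{b\,(\mathrm{mod}\, c)} e\big((bm + \overbar{b}\,a^2)/c\big)$ and interchanging the order of summation, the sum over $a$ becomes, for each unit $b$, the complete quadratic Gauss sum $\sum_{a\,(\mathrm{mod}\, c)} e\big((\overbar{b}\,a^2 + na)/c\big)$. For odd $c$, completing the square rewrites the phase as $\overbar{b}\,(a + \overbar{2}\,bn)^2 - \overbar{4}\,b\,n^2 \pmod c$, and the classical Gauss-sum evaluation (with its sign factor $\varepsilon_c$ equal to $1$ or $i$ according to $c \bmod 4$) collapses the $a$-sum. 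The remaining $b$-sum is then a quadratic-symbol-twisted Gauss sum in which $m$ and $n$ enter only through $4m - n^2 = -\varDelta$; this is precisely why the answer depends on $(m,n)$ solely via $\varDelta$. The power of $2$ dividing $c$ must be handled separately, since there $2$ and $4$ are not invertible and the square can only be completed after imposing the appropriate parity conditions.

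Finally, a twisted Chinese Remainder Theorem for Kloosterman and Gauss sums shows that $c \mapsto T(c)$ is multiplicative, as is $c \mapsto \rho(c,\varDelta)$; hence both Dirichlet series factor into Euler products, and it suffices to check the local identity $\sum_{e\geqslant 0} T(p^e)/p^{e(1+s)} = (1 - p^{-s})\sum_{e\geqslant 0} \rho(p^e, \varDelta)/p^{es}$, equivalently the prime-power case $\sum_{j\leqslant e} T(p^j)/p^j = \rho(p^e, \varDelta)$ of the divisor identity, at each $p \nmid N$. This I would verify against the standard prime-power values of $\rho(p^e,\varDelta)$ — for odd $p\nmid\varDelta$ one has $\rho(p^e,\varDelta) = 1 + \big(\tfrac{\varDelta}{p}\big)$ for $e \geqslant 1$, with the analogous values when $p \mid \varDelta$ — taking the product over $p \nmid N$ to recover $\zeta^{(N)}(s)^{-1}\sum_{(c,N)=1}\rho(c,\varDelta)/c^s = L^{(N)}(s,\varDelta)/\zeta^{(N)}(2s)$. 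I expect the main obstacle to be the $2$-adic bookkeeping: keeping track of the sign $\varepsilon_c$, the Jacobi symbols, and the parity constraints so that the local factor at $p = 2$ reproduces $\rho(2^e, \varDelta)$ exactly, the odd primes being comparatively routine once the Gauss sum is in hand.
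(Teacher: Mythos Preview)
Your reduction is exactly the paper's: the authors quote from Balkanova--Frolenkov the identity
\[
T(c)=c\sum_{d\mid c}\mu(c/d)\,\rho(d,\varDelta),
\]
which is the M\"obius inverse of your $\sum_{d\mid c}T(d)/d=\rho(c,\varDelta)$, and then read off \eqref{3eq: Kloosterman = L} from the Dirichlet convolution $\bigl(\sum\mu(d)d^{-s}\bigr)\bigl(\sum\rho(c,\varDelta)c^{-s}\bigr)$, just as you do.

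Where you diverge is in the proof of the identity itself. Your Gauss-sum/prime-power route is valid but heavier than necessary, and the $2$-adic bookkeeping you flag can be avoided entirely. A cleaner argument (essentially what lies behind the cited lemma): in
\[
T(c)=\sumx_{b\,(\mathrm{mod}\,c)}\;\sum_{a\,(\mathrm{mod}\,c)} e\!\left(\frac{bm+\overbar{b}a^{2}+an}{c}\right)
\]
substitute $a\mapsto ba$. The phase becomes $b(a^{2}+an+m)/c$, so the $b$-sum is the Ramanujan sum $\sum_{d\mid(c,\,a^{2}+an+m)}d\,\mu(c/d)$, giving
\[
T(c)=c\sum_{d\mid c}\mu(c/d)\,\#\{a\bmod d:\ a^{2}+an+m\equiv 0\ (\mathrm{mod}\ d)\}.
\]
Writing $4(a^{2}+an+m)=(2a+n)^{2}-\varDelta$ and noting that $b^{2}\equiv\varDelta\ (\mathrm{mod}\ 4)$ forces $b\equiv n\ (\mathrm{mod}\ 2)$, the count equals $\rho(d,\varDelta)$ for \emph{every} $d$ (odd or even), and the identity drops out without any local case analysis. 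This buys you a uniform, coordinate-free proof; your Euler-product approach buys explicit local factors, which are not needed here.
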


\begin{proof}
According to the proof of Lemma 4.1 in \cite{Ba-Fr-Mean}, we have
\begin{equation*}
\sum_{a (\mathrm{mod}\, c)}
S(m,a^2;c) e\Big( \frac{a n}{c}\Big) = c\sum_{d | c} \mu(d) \rho (c/d, n^2-4m),
\end{equation*}
and \eqref{3eq: Kloosterman = L} follows immediately upon using the formula for a product of two Dirichlet series.
\end{proof}

\begin{remark}\label{rem: L(N)(s) = L(s)}
	For the last case in {\rm\eqref{2eq: L(s, Delta) = ...}}, a simple observation is that $L^{(N)} (s, \varDelta) = L  (s, \varDelta)$ if $N | D$ and $(N, q) = 1$. This is because $ L(s,\chi_D) = L^{(N)}(s,\chi_D) $ for $ N |D$,  while the whole \underline{}sum over divisors $d | q$ is contained  in  $L^{(N)} (s, \varDelta)$ for $(N, q) = 1$. 
\end{remark}

Finally, we have the following trivial bound for $ L  (s, \varDelta) $ in the $\varDelta$-aspect.

\begin{lemma}\label{lem: convex bound}
	Set $\sigma = \mathrm{Re} (s)$. If $\varDelta $ is not a square, then
	\begin{equation}\label{3eq: trivial bound for L(s, Delta)}
	 L  (s, \varDelta) \Lt_{ s, \vepsilon } |\varDelta|^{\max \left\{0, \frac {1} 2 - \frac 1 2 \sigma, \frac {1} 2 - \sigma \right\} + \vepsilon },
	\end{equation}
	for any $\vepsilon > 0$, with the implied constant uniform for $s$ in compact sets.
\end{lemma}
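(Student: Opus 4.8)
The plan is to reduce everything to the explicit factorization of $L(s,\varDelta)$ supplied by Lemma~\ref{lemma-Zagier} and then bound the two resulting factors separately. If $\varDelta\equiv 2,3\,(\mathrm{mod}\,4)$ then $L(s,\varDelta)=0$ and there is nothing to prove, while $\varDelta=0$ is a square and hence excluded; so I may assume $\varDelta\equiv 0,1\,(\mathrm{mod}\,4)$ with $\varDelta$ not a square, and write $\varDelta=Dq^2$ with $D$ the fundamental discriminant of $\Q(\sqrt{\varDelta})$ and $q\geqslant 1$. By the last case of \eqref{2eq: L(s, Delta) = ...},
\[
L(s,\varDelta)=L(s,\chi_D)\,g(q),\qquad g(q):=\sum_{d\mid q}\mu(d)\chi_D(d)\frac{\sigma_{1-2s}(q/d)}{d^{s}}.
\]
Since $\varDelta$ is not a square we have $D\neq 1$, so $\chi_D$ is a primitive non-principal real character modulo $|D|$, and $|D|q^2=|\varDelta|$ with $|D|\leqslant|\varDelta|$ and $q^2\leqslant|\varDelta|$. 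Writing $\sigma=\mathrm{Re}(s)$ and $\theta=\max\{0,\tfrac12-\tfrac12\sigma,\tfrac12-\sigma\}$ for the target exponent, it suffices to prove $L(s,\chi_D)\Lt_{s,\ve}|D|^{\theta+\ve}$ and $g(q)\Lt_{s,\ve}q^{2\theta+\ve}$, since multiplying these and using $|D|^{\theta}q^{2\theta}=|\varDelta|^{\theta}$ yields the claim.

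For the first factor I would invoke the convexity bound for Dirichlet $L$-functions in the conductor aspect, treating $s$ as lying in a fixed compact set so that all dependence on $\mathrm{Im}(s)$ and on the gamma factors is absorbed into the implied constant. For $\sigma\geqslant 1$ one has $L(s,\chi_D)\Lt_s 1$; for $0\leqslant\sigma\leqslant 1$ the Phragm\'en--Lindel\"of principle gives $L(s,\chi_D)\Lt_{s,\ve}|D|^{(1-\sigma)/2+\ve}$; and for $\sigma<0$ the functional equation relating $L(s,\chi_D)$ to $L(1-s,\chi_D)$ contributes a conductor factor of modulus $|D|^{1/2-\sigma}$ together with $L(1-s,\chi_D)\Lt_s 1$ (as $\mathrm{Re}(1-s)>1$), giving $L(s,\chi_D)\Lt_s|D|^{1/2-\sigma}$. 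These three exponents are exactly $0$, $\tfrac12-\tfrac12\sigma$ and $\tfrac12-\sigma$, and in each range the applicable one is the largest of the three, agreeing at the joins $\sigma=0,1$; hence together they read $L(s,\chi_D)\Lt_{s,\ve}|D|^{\theta+\ve}$.

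For the second factor I would exploit that $g$ is the Dirichlet convolution of the multiplicative functions $d\mapsto\mu(d)\chi_D(d)d^{-s}$ and $\sigma_{1-2s}$, hence multiplicative, with $g(p^a)=\sigma_{1-2s}(p^a)-\chi_D(p)p^{-s}\sigma_{1-2s}(p^{a-1})$ at prime powers. Using $|\chi_D|\leqslant 1$ and the elementary bound $\sigma_{1-2\sigma}(p^a)\leqslant(a+1)p^{\max\{0,1-2\sigma\}a}$ (valid uniformly in $\sigma$, since the terms $p^{(1-2\sigma)i}$ for $0\leqslant i\leqslant a$ are maximized at $i=a$ or $i=0$), one checks that the second summand is dominated by the first uniformly for $\sigma$ in a compact set, so $|g(p^a)|\Lt_s(a+1)p^{\max\{0,1-2\sigma\}a}$. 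Taking the product over $p^a\,\|\,q$ turns the factors $(a+1)$ into the divisor function $\tau(q)\Lt_\ve q^\ve$ and the prime powers into $q^{\max\{0,1-2\sigma\}}$, whence $g(q)\Lt_{s,\ve}q^{\max\{0,1-2\sigma\}+\ve}$.

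Finally I combine the two estimates. The linchpin is the pointwise inequality $\max\{0,1-2\sigma\}\leqslant\max\{0,1-\sigma,1-2\sigma\}=2\theta$, which holds because the right-hand maximum is taken over a larger set; thus $g(q)\Lt_{s,\ve}q^{2\theta+\ve}$, and multiplying by the bound for $L(s,\chi_D)$ gives $L(s,\varDelta)\Lt_{s,\ve}|D|^{\theta+\ve}q^{2\theta+\ve}\Lt_{s,\ve}|\varDelta|^{\theta+\ve}$, as desired. The main obstacle is not any single estimate but keeping every constant \emph{uniform for $s$ in compact sets} while matching the piecewise exponent: one must verify that the convexity exponent for $L(s,\chi_D)$ and the divisor-sum exponent for $g(q)$ assemble, through $|\varDelta|=|D|q^2$, into precisely the single maximum $\theta$, which is exactly why the book-keeping inequality $\max\{0,1-2\sigma\}\leqslant 2\theta$ is the crucial step.
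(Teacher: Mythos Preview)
Your proof is correct and follows essentially the same approach as the paper: factor $L(s,\varDelta)$ via Lemma~\ref{lemma-Zagier}, bound $L(s,\chi_D)$ by the convexity/functional-equation argument to get the exponent $\theta=\max\{0,\tfrac12-\tfrac12\sigma,\tfrac12-\sigma\}$, bound the divisor sum trivially by $q^{\max\{0,1-2\sigma\}+\vepsilon}$, and combine using $|\varDelta|=|D|q^2$. The paper's proof is simply a terser version of yours, omitting the multiplicativity computation for $g(q)$ and the explicit inequality $\max\{0,1-2\sigma\}\leqslant 2\theta$ that you (rightly) make explicit.
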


\begin{proof}
Consider the last formula in \eqref{2eq: L(s, Delta) = ...}. 
By the trivial bound $
|L (s, \chi_D)| \leqslant \zeta (\sigma) $ for $\sigma > 1$, the functional equation for $L (s, \chi_D)$, and the Phragm\'en--Lindel\"of convex principle, we infer that $
L (s, \chi_D) \Lt_{s, \vepsilon} |D|^{\max \left\{0, \frac {1} 2 - \frac 1 2 \sigma, \frac {1} 2 - \sigma \right\} + \vepsilon }, $ whereas  the finite sum over $d$ in \eqref{2eq: L(s, Delta) = ...} is trivially bounded by $ O \big( q^{\max \left\{0, 1-2\sigma \right\} + \vepsilon} \big) $. Then \eqref{3eq: trivial bound for L(s, Delta)} follows immediately.
\end{proof}

	\section{Proof of Theorem \ref{thm-dimension-formula}}
\setcounter{equation}{0}

We consider the mean
\begin{align}\label{3eq: defn of P(s)}
	M^{\ssharp}(s) = 
	\sum_{f\in H^{\star}_{2k} (N^3)} \epsilon_f \frac{L(s, \sym^2(f)) }{L(1, \sym^2(f))},
\end{align}
so that $ |H^{\splus}_{2k} (N^3)| - |H^{\sminus}_{2k} (N^3)| =  
M^{\ssharp} (1) $.
We shall prove in \S \ref{sec: proof of thm P(s)} the following exact formula for $M^{\ssharp}(s)$.

\begin{thm}\label{thm: P(s) = ...}
Assume that $N > 1$ is squarefree and   $k > 1$.	For $2-2 k < \mathrm{Re} (s) < 2k -1 $, we have
\begin{align}\label{3eq: thm 3.1}
M^{\ssharp}(s) = M_0^{\ssharp} (s) + M_{1}^{\ssharp}(s) + M_{2}^{\ssharp} (s),
\end{align}
with
\begin{equation}\label{3eq: thm 3.1, M 0}
\begin{split}
M_0^{\ssharp} (s) =
\frac{(2k  -  1)\sqrt N \varphi (N) }{2\pi}  \frac{\Gamma\big(  k   -   \frac 1 2 s  \big)}
{\Gamma \big( k   +   \frac 1 2 s  \big)} \bigg(\frac{N^{3/2} } {2\pi } \bigg)^{\hskip -1pt 1-s}     { L (s, -4N)} ,
\end{split}
\end{equation}
\begin{equation}\label{3eq: thm 3.1, M 1}
\begin{split}
M_1^{\ssharp}(s) =  &\,
\frac{(2k   -   1)N^{3/2}   }{\pi}  \frac{\Gamma\big(k -   \frac 1 2 s \big)}
{\Gamma \big(k  + \frac 1 2 s\big)} \bigg( \frac {N^{3/2}} {2\pi  } \bigg)^{\hskip -1pt 1-s}  \sum_{d\mid N}  \frac{\mu(d)}{d }    \\
&   \cdot \sum_{1\leqslant n< 2d/N^{3/2} }  { L \big(s,(n N^2/d)^2-4N\big)} {_2F_1}\bigg(k-\frac{s}{2},1-k-\frac{s}{2}; \frac{1}{2};
\frac{n^2N^3}{4d^2 }\bigg)    ,
\end{split}
\end{equation}
and
	\begin{equation}\label{3eq: thm 3.1, M 2}
		\begin{split}
			M_2^{\ssharp}(s) =   &\,  \frac { (-1)^k (2\pi)^s   } { \pi^2 N^{3(k-1)} }  \cos \Big(\frac{\pi s}{2}  \Big)
			\frac{\Gamma (2k-s)}{\Gamma(2k-1)}   \sum_{d\mid N}\frac{\mu(d)}{d^{s-2k+1}}     \\
			&   \cdot   \sum_{n > 2d/N^{3/2} }   \frac { L \big(s,(n N^2/d)^2-4N\big)} {n^{2k-s}}
			{_2F_1}\bigg(k-\frac{s}{2},
			k+\frac{1}{2}-\frac{s}{2};
			2k; \frac{4d^2 } {n^2N^3} \bigg),
		\end{split}
	\end{equation}
where the   $n$-series converges absolutely to an analytic function of $s$ in the given region.
\end{thm}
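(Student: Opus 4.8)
The plan is to start from the Dirichlet series \eqref{eq-symmetric-square} for $L(s,\sym^2(f))$, valid for $\mathrm{Re}(s)>1$, and to observe that since $\lambda_f(1)=1$, interchanging the finite sum over $f$ with the $n$-sum gives
\[
M^{\ssharp}(s)=\zeta^{(N)}(2s)\sum_{(n,N)=1}\frac{\Delta_{2k,N^3}^{\ssharp}(n^2,1)}{n^s}.
\]
I would then insert the root-number weighted Petersson formula \eqref{2eq: Petersson sharp} for $\Delta_{2k,N^3}^{\ssharp}(n^2,1)$ (first entry $n^2$, second entry $1$, so the Bessel argument is $4\pi n/(N^{3/2}c)$), turning $M^{\ssharp}(s)$ into a double sum over $n$ and $c$ (coprime to $N$) of $n^{-s}c^{-1}S(\overbar{N}{}^{3}n^2,1;c)\,J_{2k-1}(4\pi n/(N^{3/2}c))$ times explicit constants. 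The goal is to evaluate the inner $n$- and $c$-sums in closed form.

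Next I would separate the variables by opening the Bessel function through its Mellin--Barnes representation \eqref{2eq: Mellin-Barnes}, pulling the resulting $w$-integral (on a line $1-2k<\mathrm{Re}(w)<1-\mathrm{Re}(s)$) outside the $n$- and $c$-sums; the Weil bound \eqref{2eq: Weil} together with the Bessel and Stirling estimates legitimizes this interchange for $\mathrm{Re}(s)$ large. The $n$-dependence is then reduced to $n^{-s-w}$ times the Kloosterman sum $S(\overbar{N}{}^{3}n^2,1;c)$, which is periodic in $n$ modulo $c$. I would detect the condition $(n,N)=1$ by Möbius (producing the divisor sum $\sum_{d\mid N}\mu(d)$ seen in $M_1^{\ssharp}$ and $M_2^{\ssharp}$), sort $n$ into residue classes modulo $c$, and express each arithmetic-progression sum through the Hurwitz zeta function $\zeta(s+w,\cdot)$ of \eqref{2eq: Hurwitz}. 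At this point a twist of the Kloosterman sum, based on the identity $\overbar{N}{}^{3}n^2=\overbar{N}(\overbar{N} n)^2$ and the standard substitutions in $S(m,n;c)$, recasts $S(\overbar{N}{}^{3}n^2,1;c)$ as $S\big(N,(\overbar{N}{}^{2}n)^2;c\big)$, so that the fixed first entry becomes the integer $N$; this is what ultimately produces the shift $-4N$ in the Zagier discriminants.

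I would then apply Hurwitz's functional equation \eqref{2eq: Hurwitz formula} to each $\zeta(s+w,\cdot)$, converting it into the exponential series $F(\pm\cdot,1-s-w)$; the gamma- and sine-factors it generates combine with the Bessel kernel to assemble precisely the integral $I_{k,s}$ of \eqref{I-z-x}. Summing the resulting exponentials $e(a\nu/c)$ against $S(N,a^2;c)$ over $a\ (\mathrm{mod}\ c)$ and then over $c$ invokes Lemma \ref{Dirichlet-series-involving-Kloosterman-sum}, turning the $c$-sum into the Zagier $L$-functions $L^{(N)}\big(s,(\cdot)^2-4N\big)$; Remark \ref{rem: L(N)(s) = L(s)} then removes the superscript, yielding the $L\big(s,(nN^2/d)^2-4N\big)$ of the statement. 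The polar part of $\zeta(s+w,\cdot)$ at $s+w=1$, carried through the contour, contributes the residue at $w=1-s$: there the Bessel kernel evaluates to $\Gamma\big(k-\frac{1}{2}s\big)/\Gamma\big(k+\frac{1}{2}s\big)$ and $(2\pi/N^{3/2})^{-w}$ to $(N^{3/2}/2\pi)^{1-s}$, while the frequency-zero Kloosterman sum produces $L(s,-4N)$ together with a factor $\varphi(N)$ — exactly $M_0^{\ssharp}(s)$. For the non-polar frequencies, Lemma \ref{Lemma-Gauss-hyper} evaluates $I_{k,s}(x)$ at $x=nN^{3/2}/d$: the range $x<2$, i.e. $n<2d/N^{3/2}$, gives the finite hypergeometric sum $M_1^{\ssharp}(s)$, and $x>2$ gives $M_2^{\ssharp}(s)$.

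Finally, I would establish the claimed absolute convergence and analyticity of the $n$-series in $M_2^{\ssharp}(s)$ from the trivial bound for $L(s,\varDelta)$ in Lemma \ref{lem: convex bound} (with $\varDelta\asymp n^2$), the boundedness of the hypergeometric factor, and the $n^{-(2k-s)}$ decay, all valid for $\mathrm{Re}(s)<2k-1$; since $M_0^{\ssharp}$ and the finite sum $M_1^{\ssharp}$ are manifestly holomorphic, the identity, proved for $\mathrm{Re}(s)$ large, extends by analytic continuation to the whole strip $2-2k<\mathrm{Re}(s)<2k-1$. I expect the main obstacle to be the bookkeeping of the middle steps: twisting the Kloosterman sums correctly and tracking the interplay of the Möbius coprimality sum, Hurwitz's functional equation, and Lemma \ref{Dirichlet-series-involving-Kloosterman-sum}, so that the frequency entering the discriminant is exactly $nN^2/d$ and all powers of $N$, the constants, and the separation into the three ranges $x\lessgtr 2$ line up as stated.
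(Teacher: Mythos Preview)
Your proposal is correct and follows essentially the same route as the paper: insert \eqref{2eq: Petersson sharp}, open $J_{2k-1}$ via the Mellin--Barnes integral \eqref{2eq: Mellin-Barnes}, relax $(n,N)=1$ by M\"obius and pass to the Hurwitz zeta, shift the $w$-contour past $w=1-s$ to isolate the residue $M_0^{\ssharp}$, and then apply Hurwitz's formula \eqref{2eq: Hurwitz formula} together with Lemmas \ref{Lemma-Gauss-hyper} and \ref{Dirichlet-series-involving-Kloosterman-sum} (and Remark \ref{rem: L(N)(s) = L(s)}) to obtain $M_1^{\ssharp}$ and $M_2^{\ssharp}$, finishing by analytic continuation via Lemma \ref{lem: convex bound}. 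The only cosmetic difference is that the paper writes $\Delta_{2k,N^3}^{\ssharp}(1,n^2)$ rather than $\Delta_{2k,N^3}^{\ssharp}(n^2,1)$, which is immaterial by symmetry of the Kloosterman sum.
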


Firstly, note that the inequality $1\leqslant n< 2d/N^{3/2}$  in \eqref{3eq: thm 3.1, M 1} has only two integer solutions (with $d|N$): either $N=d=2, n = 1$, or $N=d=3, n = 1$. Now let $s = 1$. The expressions in \eqref{3eq: thm 3.1, M 1} and \eqref{3eq: thm 3.1, M 2}  simplify greatly because
\begin{itemize} 
	\item [(1)] $\cos (\pi s/ 2)$ vanishes at $s = 1$, and
	\item [(2)] for $N = 2$ or $3$, the only remaining hypergeometric function may be evaluated explicitly by \eqref{2eq: Gauss evaluated}.
\end{itemize}
After some calculations, we obtain
\begin{equation}
	\begin{split}
	M^{\ssharp}(1) = \frac{ \sqrt {N}  \varphi (N)}{ \pi}      { L (1, -4N)} \, & - \delta (N, 2) \frac{2 \sqrt 2    }{\pi}  L  (1, -4) \cos \Big(\pi \frac {2k-1} 4 \Big) \\
	&  - \delta (N, 3) \frac{2 \sqrt3    }{\pi}  L  (1, -3) \cos \Big(\pi \frac {2k-1} 3\Big).
	\end{split}
\end{equation}
Here $2 \sqrt{2}$ and $2 \sqrt{3}$ come from $2 N^{3/2}/ d$ for $N = d = 2$ and $N=d=3$ respectively. 
Recall that $N > 1$ is squarefree. 
By \eqref{2eq: L(s, Delta) = ...} in Lemma \ref{lemma-Zagier}, along with Dirichlet's class number formula \eqref{1eq: Dirichlet class}, we have
\begin{equation*}
   L (1, -4N) =
	\left\{
	\begin{aligned}
		&\frac{\pi}{2\sqrt{N}}h(-4N),  && \text{ if
		} N\equiv 1, 2 \, (\mathrm{mod}\, 4),   \\
		& \frac{\pi}{2\sqrt{N}}h(-N)\left(3 - \chi_{-N}(2)\right), 
		   && \text{ if
		} N\equiv 3 \, (\mathrm{mod}\, 4), N\neq 3 ,\\
		&\frac{\pi}{6\sqrt{3}} \left(3 - \chi_{-3}(2)\right),    && \text{ if } N=3,
	\end{aligned}
	\right.
\end{equation*}
and
\begin{align*}
  L(1, -4) = \frac {\pi} {4 }, \quad \quad	 L(1, -3) = \frac {\pi} {3 \sqrt{3} },
\end{align*}
in which
\begin{equation*}
	\chi_{-N} (2) = \left\{ \begin{aligned}
		& 1, && \text{ if } N\equiv 7 \, (\mathrm{mod}\, 8), \\
		& - 1,  && \text{ if } N\equiv 3 \, (\mathrm{mod}\, 8).
	\end{aligned} \right.
\end{equation*}
Note that $\sigma_{-1} (2) = 3/2$, that  $h (-3) = h (-4) = h (-8) = 1$, and that $w (-3) = 6$, $w(-4) = 4$ and $w (D) = 2$ for every fundamental discriminant $D < -4$. Moreover,
\begin{equation*}
\cos \Big(\pi \frac {2k-1} 4\Big) = \left\{ \begin{aligned}
& 1/\sqrt 2 , && \text{ if } k \equiv 0, 1 \, (\mathrm{mod}\, 4), \\
& - 1/\sqrt 2 , && \text{ if } k \equiv 2, 3 \, (\mathrm{mod}\, 4),
\end{aligned} \right.
\end{equation*}
and
\begin{equation*}
	\cos \Big(\pi \frac {2k-1} 3\Big) = \left\{ \begin{aligned}
		& 1/2, && \text{ if } k \equiv 0, 1 \, (\mathrm{mod}\, 3), \\
		& - 1, && \text{ if } k \equiv 2 \, (\mathrm{mod}\, 3).
	\end{aligned} \right.
\end{equation*}
Combining the foregoing results, Theorem \ref{thm-dimension-formula} follows immediately.


\section{Proof of Theorem \ref{thm: P(s) = ...}}\label{sec: proof of thm P(s)}
\setcounter{equation}{0}

The main ideas in the proof of Theorem \ref{thm: P(s) = ...} below are essentially due to Balkanova, Frolenkov, and Shenhui Liu \cite{Ba-Fr-Mean,Liu-First-Moment}.

Recall the definition of $M^{\ssharp}(s)$ in \eqref{3eq: defn of P(s)}. We first assume that  $3/2 < \mathrm{Re}(s)< 2k-1$. In view of \eqref{eq-symmetric-square} and \eqref{2eq: defn Delta sharp}, we infer that
\begin{align*}
	M^{\ssharp}(s) = \zeta^{(N)} (2s)  \sum_{ (n,N)=1 }\frac{\Delta_{2k, N}^{\ssharp}(1, n^2 )}{n^s}.
\end{align*}
By the Petersson formula \eqref{2eq: Petersson sharp} in Proposition \ref{prop-sPTF}, we have
\begin{align*}
	M^{\ssharp}(s)=\frac{(2k-1)N^{3/2}}{\pi} \zeta^{(N)} (2s)
	\mathop{ \mathop{\sum\sum}_{c, \hskip 1pt n    }}_{(c n,N)=1}
	\frac{S\big(\overbar {N}{}^3 ,n^2;c\big)}{c n^s} J_{2k-1}\left(\frac{4\pi n}{N^{3/2}c}\right). 
\end{align*}
By   \eqref{2eq: Weil} and \eqref{2eq: bounds for Bessel}, the double series is absolutely convergent if $ \mathrm{Re}(s) > 3/2$. 

Next, we use the Mellin--Barnes formula for the Bessel function as in \eqref{2eq: Mellin-Barnes}, obtaining
\begin{equation*}
	M^{\ssharp}(s) =
	\frac{(2k-1)N^{3/2}}{\pi} \zeta^{(N)} (2s) \hskip -3 pt \sum_{(c, N) = 1} \hskip -2 pt \frac 1 c \cdot
	\frac{1}{4\pi i}\int_{(\sigma)} \hskip -1 pt \frac{\Gamma\big(k - \frac 1 2 + \frac 1 2 w \big)}
	{\Gamma\big(k + \frac 1 2   - \frac 1 2 w\big)} \bigg( \hskip -1 pt \frac{N^{3/2} c} {2\pi } \hskip -1 pt \bigg)^{\hskip -1 pt w}  \hskip - 4 pt \sum_{(n, N) = 1} \hskip - 4 pt
	\frac{S\big(\overbar {N}{}^3 ,n^2;c\big)}{  n^{s+w}}
	d w,
\end{equation*}
where,  by Stirling's formula and Weil's bound,  we require $  1 - \mathrm{Re} (s)   < \sigma < - 1/2$ to guarantee the absolute convergence of  the sums over $c$ and $n$ as well as the integral over $w$. Note that our assumption $\mathrm{Re} (s) < 2k-1$ implies $1-\mathrm{Re} (s) > 1-2k$, so  the condition $     \sigma > 1-2k $ for  \eqref{2eq: Mellin-Barnes} is guaranteed.  For the innermost sum over $n$, we   use the M\"obius function to relax the coprimality condition $(n,N)=1$  and write $n = a + c m$ so that the $m$-sum yields the Hurwitz zeta function $\zeta (s+w, a/c)$ as in \eqref{2eq: Hurwitz}. Thus
\begin{align*}
	\sum_{(n, N) = 1} \hskip -2pt
	\frac{S\big(\overbar {N}{}^3 ,n^2;c\big)}{  n^{s+w}}  = \frac 1 {c^{s+w}}  \sum_{d\mid N}\frac{\mu(d)}{d^{s+w}}
	\sum_{ a = 1}^c S \big(\overbar N{}^3 ,  (ad)^2;c\big)
	\zeta\Big(s+w,\frac{ a}{c}\Big).
\end{align*}
Recall that $\zeta (s+w, { a}/{c} )$ has a simple pole at $w = 1-s$ with residue $1$. We shift the $w$-contour to the left down to $\mathrm{Re} (w) = \sigma_1 $, with $1-2k < \sigma_1 < - \mathrm{Re} (s) $, crossing   a simple pole at $w = 1-s$.\footnote{We remark that in the case $k=1$ there is an issue with the proof of \cite[Lemma 5.1]{Ba-Fr-Mean}.  When they shift the integral contour to the left, the simple pole of the gamma function at $w = 1-2k = -1$ is also crossed. Additional work is required to address the problem of the analytic continuation of the resulting expression from $3/2 < \mathrm{Re} (s) < 2$ to $0 < \mathrm{Re} (s) < 1$. 
} A technical matter here is to check  that  as $ |\mathrm{Im}(w)| \rightarrow \infty$ the integrand $\rightarrow 0$ uniformly for $\sigma_1 \leqslant \mathrm{Re}(w) \leqslant \sigma$,  but this can be done for $\mathrm{Re} (s) > 3/2$ by the Stirling formula and the crude bound for the Hurwitz zeta function in \eqref{2eq: bounds for zeta}. 
	Therefore
\begin{equation}\label{4eq: P(s) = ... 2}
	\begin{split}
		M^{\ssharp}(s) = &\,
		\frac{(2k \hskip -0.5 pt - \hskip -0.5 pt 1)N^{3/2}}{2\pi} \zeta^{(N)} (2s) \frac{\Gamma \big(\hskip -0.5 pt k \hskip -0.5 pt - \hskip -0.5 pt  \frac 1 2 s \hskip -0.5 pt \big)}
		{\Gamma\big(\hskip -0.5 pt k \hskip -0.5 pt + \hskip -0.5 pt \frac 1 2 s \hskip -0.5 pt \big)} \bigg(\hskip -1 pt \frac{N^{3/2} } {2\pi } \hskip -1 pt \bigg)^{\hskip -1 pt 1-s} \hskip -1 pt \sum_{d\mid N} \hskip -1 pt \frac{\mu(d)}{d} \hskip -4 pt \sum_{ {(c,N)=1}} \hskip -2 pt \frac{1}{c^{1+s}}
		\hskip -1 pt \sum_{ a = 1}^c \hskip -1 pt S\big(\overbar N{}^3 , (ad)^2;c\big) \\
		& + \frac{(2k-1)N^{3/2}}{\pi} \zeta^{(N)} (2s) \sum_{d\mid N}\frac{\mu(d)}{d^{s}}  \hskip -2pt \sum_{(c, N) = 1}\frac 1 {c^{1+s}}   \\
		& \hskip 45 pt \cdot
		\frac{1}{4\pi i}\int_{(\sigma_1)} \frac{\Gamma\left(k - \frac 1 2 + \frac 1 2 w \right)}
		{\Gamma\left(k + \frac 1 2   - \frac 1 2 w\right)} \bigg( \hskip -1 pt \frac{N^{3/2} } {2\pi d } \hskip -1 pt \bigg)^{\hskip -1 pt w}    \sum_{ a = 1}^c S \big(\overbar N{}^3 ,  (ad)^2;c\big)
		\zeta\Big(s+w,\frac{ a}{c}\Big)
		d w  .
	\end{split}
\end{equation}
For the first term in \eqref{4eq: P(s) = ... 2}, with the observation that the $a$-sum may be rewritten as
\begin{align*}
	\sum_{a (\mathrm{mod}\, c)} S \big( N, (a \overbar N{}^2 d)^2 ; c \big) = \sum_{a (\mathrm{mod}\, c)} S  ( N, a^2 ; c  ),
\end{align*}
we obtain the term $M_0^{\ssharp} (s)$ in \eqref{3eq: thm 3.1, M 0} by virtue of Lemma \ref{Dirichlet-series-involving-Kloosterman-sum} (with $m=N$ and $n=0$) and Remark \ref{rem: L(N)(s) = L(s)}.
Applying Hurwitz's formula \eqref{2eq: Hurwitz formula}, we have 
\begin{align*}
&\quad \ 	\sum_{ a = 1}^c   S \big(\overbar N{}^3 ,  (ad)^2;c\big)
	\zeta\Big(s+w,\frac{ a}{c}\Big) \\
	& = 2 (2\pi)^{s+w-1} \Gamma (1-s-w) \sin\Big(\pi \frac{s+w}{2} \Big) \sum_{a (\mathrm{mod}\, c)}  S \big(\overbar N{}^3 ,  (ad)^2;c\big) F \Big(\frac a c, 1-s-w \Big).
\end{align*}
Substituting this into the second term in  \eqref{4eq: P(s) = ... 2} and opening the function $F  (  a / c, 1-s-w  )$ according to \eqref{2eq: defn of F(b, s)}, we arrive at
\begin{align*}
	\frac{(2k-1)N^{3/2}   }{\pi} \zeta^{(N)} (2s) \sum_{d\mid N}\frac{\mu(d)}{d^{s}} \sum_{n=1}^{\infty} \frac {I_{k, s} \big( n N^{3/2}  / d \big)} {(2\pi n)^{1-s}}  \hskip -2pt \sum_{(c, N) = 1}\frac 1 {c^{1+s}} \sum_{a (\mathrm{mod}\, c)}  S \big(\overbar N{}^3 ,  (ad)^2;c\big)    {e\Big(\frac {an} c\Big)},
\end{align*}
in which $I_{k, s} \big( n N^{3/2} / d \big)$ is the integral defined as in \eqref{I-z-x}; the absolute convergence of the sums over $c$ and $n$ may be easily verified for $3/2 < \mathrm{Re} (s) < 2k-1$ (recall that $1-2k < \sigma_1 < - \mathrm{Re} (s) $).   An application of Lemmas \ref{Lemma-Gauss-hyper} and \ref{Dirichlet-series-involving-Kloosterman-sum} (along with Remark \ref{rem: L(N)(s) = L(s)}) yields the sum of $M_1^{\ssharp} (s)$ and $M_2^{\ssharp} (s)$ as in \eqref{3eq: thm 3.1, M 1} and \eqref{3eq: thm 3.1, M 2}. 
Note that the $a$-sum above is equal to 
\begin{align*}
	\sum_{a (\mathrm{mod}\, c)} S \big( N, (a \overbar N{}^2 d)^2 ; c \big) {e\Big(\frac {an} c\Big)} = \sum_{a (\mathrm{mod}\, c)} S  ( N, a^2 ; c  ) e \bigg(\frac {a n N^2/d} {c}\bigg). 
\end{align*}
Moreover, since $N > 1$ is squarefree, it is easy to see that $\varDelta = (nN^2/d)^2 - 4 N = N (N (nN/d)^2 - 4) $ is divisible by $p$ but not $p^2 $ for any odd prime $p | N$, and that $\varDelta \equiv 8, 12 \, (\mathrm{mod}\, 16)$ when $2 | N$, and,  in view of Remark \ref{rem: L(N)(s) = L(s)}, it readily follows that  $L^{(N)} (s, \varDelta ) = L  (s, \varDelta  )$. 

We have thus established the validity of \eqref{3eq: thm 3.1} for $3/2 < \mathrm{Re} (s) < 2k-1$. Next, since $ (nN^2/d)^2 - 4 N $  could never be a square for any $n \geqslant 0$ (see the arguments at the end of the last paragraph), the functions $L  (s, -4 N)$ and $L \big(s,(n N^2/d)^2-4N\big)$ in \eqref{3eq: thm 3.1, M 0}--\eqref{3eq: thm 3.1, M 2} are   entire  according to Lemma \ref{lemma-Zagier}.
By \eqref{2eq: Gauss hypergeometric} and the crude estimate \eqref{3eq: trivial bound for L(s, Delta)} in Lemma \ref{lem: convex bound}, the infinite series over $n$ on the right-hand side of \eqref{3eq: thm 3.1, M 2} is absolutely and compactly convergent for $ 2-2k < \mathrm{Re} (s) < 2k-1 $ and hence gives rise to an analytic function of $s$ on this domain. Note that \eqref{2eq: Gauss hypergeometric} implies that the  hypergeometric function in \eqref{3eq: thm 3.1, M 2} is bounded when $n$ is large,  in a uniform way if $s$ is in a compact set.  On the other hand, the mean $ M^{\ssharp} (s) $ as defined in \eqref{3eq: defn of P(s)} is an entire function, since each $L(s, \sym^2(f))$ is entire.    
 Finally, the proof is completed by the principle of analytic continuation.


\newcommand{\etalchar}[1]{$^{#1}$}
\def\cprime{$'$}

\end{document}